\theoremstyle{definition}
\newtheorem{lemma}{Lemma}
\newtheorem{corollary}{Corollary}
\theoremstyle{definition}
\newtheorem{definition}{Definition}
\theoremstyle{definition}
\theoremstyle{definition}
\newtheorem*{theorem*}{Theorem}
\theoremstyle{definition}
\newtheorem{theorem}{Theorem}
\newtheorem{proposition}{Proposition}
\theoremstyle{definition}
\newtheorem{assumption}{Assumption}
\newcommand{\abs}[1]{|#1|}
\newcommand{\set}[1]{\left\{#1\right\}}
\newcommand{\kk}[1]{\kappa(#1)}
\newcommand{\kkk}{\kappa}
\newcommand{\aux}{{\rm aux}}
\newcommand{\PDdecomp}{$DP$-Nash subgraph}
\newcommand{\PDdecomps}{$DP$-Nash subgraphs}
\newcommand{\2}{\vspace{0.1cm}}
\newcommand{\induce}[2]{#1 [  #2 ] }
\newcommand{\blue}[1]{{\color{blue}#1}}
\newcommand{\red}[1]{{\color{red}#1}}
\newcommand{\lv}[1]{}
\begin{document}

\title{Exact capacitated domination: on the computational complexity of uniqueness}
\author{
Gregory Z. Gutin\footnote{Computer Science Department, Royal Holloway University of London.} \hspace{.1in} Philip R.\ Neary\footnote{Economics Department, Royal Holloway University of London.} \hspace{.1in} Anders Yeo\footnote{IMADA, University of Southern Denmark.} $^{,}$\footnote{Department of Mathematics, University of Johannesburg.}
}

\date{\today}

\maketitle

\begin{abstract}
\noindent
In this paper we consider a local service-requirement assignment problem named {\it exact capacitated domination} from an algorithmic point of view.
This problem aims to find a solution (a Nash equilibrium) to a game-theoretic model of public good provision.
In the problem we are given a {\it capacitated graph}, a graph with a parameter defined on each vertex that is interpreted as the capacity of that vertex.
The objective is to find a {\it $DP$-Nash subgraph}:\ a spanning bipartite subgraph with partite sets $D$ and $P$, called the {\em $D$-set} and {\em $P$-set} respectively, such that no vertex in $P$ is isolated and that each vertex in $D$ is adjacent to a number of vertices equal to its capacity.
We show that whether a capacitated graph has a unique $DP$-Nash subgraph can be decided in polynomial time.
However, we also show that the nearby problem of deciding whether a capacitated graph has a unique $D$-set is {\sf co-NP}-complete.


\end{abstract}


\setstretch{1.3}

\newpage

\section{Introduction}\label{sec:intro}

Given a graph $G = (V, E)$, a {\it dominating set} is a subset of vertices $S$ such that for every vertex $v \in V$, either $v$ belongs to $S$ or $v$ has a neighbour that belongs to $S$.
Dominating sets have found countless applications in the efficient allocation of resources.
One such example is that of communication in wireless ad-hoc networks, in which a peer can only be reached after entering the area that it covers.
In such a setting, if the cost of allocating resources is the same across all locations, then a minimum dominating set describes the least costly way to allocate resources such that every location has network access.

In practice, however, the setting may be more complicated. 
In particular there may be additional strategic, so called game-theoretic, forces at work. 
Given the cost, it is preferable that a neighbouring location supplies instead of vice versa.
Such strategic forces will pressure the requirement of independence on the subset of vertices $S$, since it is never optimal to supply the resource if access is already available.
This is precisely what happens in economic models of public good provision in networks \citep{BramoulleKranton:2007:JET,GaleottiGoyal:2010:RES} where the solutions, the Nash equilibria, correspond to independent dominating sets.

In this paper we consider the {\it exact capacitated domination} problem.
Using our example of communication in wireless ad-hoc networks, there is the additional assumption of a capacity constraint on the number of neighbouring locations that each peer can provide network access to.
If we wish to ensure that every location in the network has access, then it must be the case that any time resources are assigned to a location, these resources must never be less than that location's capacity.
In particular, whenever resources are assigned to a location, that location must specify the neighbouring locations to which it is willing to provide access.
Formally this is described by a {\it $DP$-Nash subgraph} (hereafter a Nash subgraph):\ a spanning bipartite subgraph with partite sets $D$ and $P$, called the {\em $D$-set} and {\em $P$-set} respectively, such that no vertex in $P$ is isolated and each vertex in $D$ is adjacent to a number of vertices equal to its capacity.
A Nash subgraph describes the {\it Nash equilibrium} of the model of public goods on networks with constrained sharing of \cite{GerkeGHN}, while knowledge of only the $D$-set (and hence the $P$-set) describes the {\it Nash equilibrium outcome}.

Our focus in this paper is the issue of uniqueness. 
The rationale for a detailed focus on uniqueness is that economic models possessing a unique equilibrium are as rare as they are useful.
Uniqueness is rare due to the mathematical structure of economic models (formally, the best-response map of \cite{Nash:1951:AM,Nash:1950:PNASUSA} rarely admits a single fixed point).
Uniqueness is useful as (i) it saves the analyst from an ``equilibrium selection'' headache - justifying why one equilibrium is more likely to emerge than another, and (ii) allows those who study \emph{game-design} to be confident in generating a particular outcome (since only one outcome is stable).
It is for this reason that models with unique equilibria are so highly coveted (see for example the model of currency attacks in \cite{MorrisShin:1998:AER}).
From a purely technical perspective the uniqueness issue is interesting since, unlike independent dominating sets, it is possible for a capacitated graph to have a unique Nash subgraph.
It is also possible for a capacitated graph to have more than one Nash subgraph and yet for all of these Nash subgraphs to have the same $D$-set.

Precisely we will focus on the following two problems:

\2

\2

{\sc Nash Subgraph Uniqueness}: {\em decide whether a capacitated graph has a unique Nash subgraph}, and 

\2

\2

{\sc $D$-set Uniqueness}:  {\em decide whether a capacitated graph has a unique $D$-set}.

\2

\2

While the two questions are clearly related, we show that their time complexities are not unless {\sf P}={\sf co-NP}.
That is, we show that {\sc Nash Subgraph Uniqueness} is polynomial-time solvable whereas {\sc $D$-set Uniqueness} is {\sf co-NP}-complete.
In fact, for {\sc $D$-set Uniqueness} we prove the following complexity dichotomy when the capacity of every vertex is equal to the same non-negative integer $k$. 
If $k\ge 2$ then {\sc $D$-set Uniqueness} is {\sf co-NP}-complete whereas if $k\in \{0,1\}$ then {\sc $D$-set Uniqueness} is in {\sf P}.
We note that the proof of Theorem \ref{all_PD} in Appendix A implies that constructing a $DP$-Nash subgraph and, thus, a $D$-set in every capacitated graph is polynomial-time solvable.

The exact capacitation domination problem is similar to to the {\sc Capacitated Domination} problem \citep{CyganPW11,KaoCL15}, where the number of neighbours that any vertex can dominate must not exceed its capacity but a vertex need not dominate up to its capacity.
Clearly the two problems are related as every exact capacitated dominating set is capacitated dominating set, though the reverse is not the case.
Our focus is on exact capacitated dominating sets since these are better-suited to economic environments; the reason is that any subset of vertices that is a capacitated dominating set but not an exact capacitated dominating set is Pareto-inefficient, an economic term that in this setting can be interpreted as meaning ``excessively wasteful''.
(See the discussion in \cite{GerkeGHN}.)


Preliminaries are given in Section \ref{sec:prel}.
To obtain the above polynomial-time complexity results for {\sc Nash Subgraph Uniqueness} and {\sc $D$-set Uniqueness}, we first prove in Section \ref{sec:char} a charaterization of capacitated graphs with unique $D$-sets, which we believe is of interest in its own right.
In Section \ref{sec:DPs}, we show that  {\sc Nash Subgraph Uniqueness} is in {\sf P}.
In Section \ref{sec:Ds} we prove the above-mentioned complexity dichotomy for {\sc $D$-set Uniqueness}.
We conclude the paper in Section \ref{sec:conc}.

\section{Preliminaries}\label{sec:prel}

In this paper, all graphs are undirected, finite, without loops or parallel edges.
Let $G=(V(G),E(G))$ be a graph and $\kappa:\ V\to \mathbb{Z}_{\ge 0}$ a function.
For every $v\in V$ we will call $\kappa(v)$ the {\em capacity} of $v$ and we will call the pair $(G,\kappa)$ a {\em capacitated graph}.
For any subgraph $H$ of $G$, including $G$ itself, we write $d_H(v)$ for the degree of $v \in V(H)$.

We have the following definition.

\begin{definition}\label{def:DPNash}
A spanning subgraph $H$ of $(G, \kappa)$ is called a {\em $DP$-Nash subgraph}, or simply a {\it Nash subgraph}, if $H$ is bipartite with partite sets $D$ and $P$, called the {\em $D$-set} and {\em $P$-set} of $H$ respectively, such that no vertex of $P$ is isolated and for every $x\in D,$ $d_H(x)=\min\{d_G(x),\kappa(x)\}$.
\end{definition}


Since every Nash subgraph $H$ is a bipartite graph, we will write it as the triple $(D,P;E')$, where $D,P$ are $D$-set and $P$-set of $H$, respectively, and $E'$ is the edge set of $H$.
A vertex subset $B\subset V$ is a {\em $D$-set} of capacitated graph $(G, \kappa)$ if $(G, \kappa)$ has a $DP$-Nash subgraph for which $B$ is the $D$-set.

A natural question to ask is whether every capacitated graph possesses a Nash subgraph. 
\cite{GerkeGHN} confirmed the existence by proving the following:

\begin{theorem} \label{all_PD}
Every capacitated graph $(G,\kkk)$ has a Nash subgraph.
\end{theorem}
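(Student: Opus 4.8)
The plan is to prove the theorem by an extremal argument. Under Assumption~1 a Nash subgraph is a spanning bipartite subgraph $H=(X,Y;E)$ with no isolated vertex in $Y$ and $d_H(x)=\kappa(x)$ for every $x\in X$. Call a spanning bipartite subgraph $H$ of $G$ together with a bipartition $(X,Y)$ of $V(G)$ (all edges of $H$ between $X$ and $Y$) a \emph{pre-Nash structure} if no vertex of $Y$ is isolated in $H$ and $d_H(x)\le\kappa(x)$ for every $x\in X$. Pre-Nash structures exist: take $X=V(G)$, $Y=\emptyset$ and $H$ edgeless. Among all pre-Nash structures I would choose one, $H=(X,Y;E)$, of minimum \emph{deficiency} $\delta(H):=\sum_{x\in X}\bigl(\kappa(x)-d_H(x)\bigr)$, and claim $\delta(H)=0$, so that $H$ is the desired Nash subgraph.

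Suppose $\delta(H)>0$ and fix $x_0\in X$ with $d_H(x_0)<\kappa(x_0)$. Since $\kappa(x_0)\le d_G(x_0)$, there is $w\in N_G(x_0)$ with $x_0w\notin E$. If $w\in Y$ then $H+x_0w$ is again a pre-Nash structure — the only changed degree is $d_H(x_0)$, which rises to at most $\kappa(x_0)$, and adding an edge cannot isolate a vertex — and $\delta(H+x_0w)=\delta(H)-1$, a contradiction; hence every $G$-neighbour of $x_0$ outside $N_H(x_0)$ lies in $X$, and there is at least one such, say $w\in X\setminus\{x_0\}$. Now move $w$ into $Y$: delete all edges of $H$ at $w$, add $x_0w$, and take the bipartition $(X\setminus\{w\},Y\cup\{w\})$; a direct computation shows the resulting graph would have deficiency $\delta(H)-\bigl(\kappa(w)-d_H(w)\bigr)-1\le\delta(H)-1$. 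So it cannot be a pre-Nash structure, and the only possible reason is that some neighbour $z$ of $w$ in $H$ had $d_H(z)=1$ and is now isolated. We conclude: every neighbour $w$ of $x_0$ lying in $X$ has a \emph{private} neighbour, i.e.\ a vertex $z=z(w)\in Y$ with $N_H(z)=\{w\}$.

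It remains to rule out this configuration, and this is where the real work lies. Morally, to move $w$ into $Y$ (thereby supplying $x_0$ with an extra edge) one must first re-home each private neighbour $z$ of $w$: attach $z$ to some other vertex of $X$ that still has spare capacity (which only decreases $\delta$ further); or, if every neighbour of $z$ in $X\setminus\{w\}$ is already saturated, detach one edge at such a neighbour and recurse on the vertex it uncovers; or, if $z$ has no neighbour in $X$ besides $w$, move $z$ itself into $X$, where it can be saturated using $w$ (now in $Y$) and its remaining neighbours. Chaining these re-homings along a shortest alternating sequence $x_0,w_1,z_1,w_2,z_2,\dots$, one should reach a stopping point yielding a pre-Nash structure that contradicts the extremal choice — of smaller $\delta$, or, after adding ``minimum $|X|$'' as a secondary criterion, of the same $\delta$ but smaller $|X|$. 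The main obstacle I expect is precisely making this rerouting rigorous: proving that the alternating sequence cannot revisit a vertex (hence terminates), that the several private neighbours of one vertex can be re-homed simultaneously without interfering, and that the degenerate cases are harmless — most delicately a private neighbour $z$ with $\kappa(z)=d_G(z)$, which can never be placed in $X$. No single move is hard; the accounting is. One can instead argue by induction on $\sum_{v\in V(G)}\kappa(v)$, decreasing $\kappa$ by $1$ at one vertex and repairing the Nash subgraph obtained for the smaller instance, but the same difficulty reappears at exactly this point.
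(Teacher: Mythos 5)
There is a genuine gap, and you have named it yourself: after the easy observations (a spare $G$-neighbour of the deficient vertex $x_0$ lying in $Y$ immediately lowers the deficiency, so all such neighbours lie in $X$ and each has a private neighbour in $Y$), the entire content of the theorem is concentrated in ruling out that final configuration, and your argument for it is only a sketch. The "re-homing along a shortest alternating sequence" step is not carried out: you do not prove the sequence terminates, that the several private neighbours of one vertex can be handled simultaneously, or that the degenerate cases (notably a private neighbour $z$ with $\kappa(z)=d_G(z)$, or $\kappa(z)=0$) are harmless — and you say so explicitly. Moreover the extremal measure itself is problematic: the natural repair (send $w$ to $Y$, send its private neighbours $z$ to $X$, keeping the edges $wz$) changes the deficiency by $-(\kappa(w)-d_H(w))-1+\sum_{z}(\kappa(z)-1)$, which can be positive when some $\kappa(z)$ is large, so minimality of $\delta$ is not contradicted by any single exchange you describe; the proposed secondary criterion (minimum $|X|$) is floated but never analysed. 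As written, the proof establishes only the existence of the obstructing configuration, not its impossibility, so the theorem is not proved.

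For comparison, the paper avoids this accounting entirely by induction on the number of vertices: first delete $d_G(u)-\kappa(u)$ edges at some vertex $u$ so that $u$ becomes saturated ($d(u)=\kappa(u)$), then remove the star $B$ on $\{u\}\cup N_G(u)$, take a Nash subgraph of $G'=G-B$ by induction, and repair: any $v$ in the $D$-set of $G'$ with $d_{G'}(v)<\kappa(v)$ lost edges only to $N_G(u)$, so the missing $\kappa(v)-d_{G'}(v)$ edges can be re-added towards $N_G(u)$, whose vertices all sit in the $P$-set. The key trick — pre-saturating one vertex by deleting excess edges, so that its whole closed neighbourhood can be peeled off as a star — is exactly what replaces the delicate rerouting your approach requires; if you want to salvage your plan, you would need either that idea or a carefully designed potential function for which some explicit exchange always strictly decreases it.
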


Since every capacitated graph has a Nash subgraph, Theorem \ref{all_PD} also implies that every capacitated graph has a $D$-set.
We provide a proof of Theorem~\ref{all_PD} in Appendix \ref{sec:AppExistence} to make this paper self-contained.

Given a capacitated graph $(G,\kkk)$, let us consider a few examples of $DP$-Nash subgraphs and $D$-sets.
If $\kappa(v)=0$ for every $v\in V$ then there is only one $DP$-Nash subgraph with $D$-set $V$ and empty $P$-set.
If $\kappa(v)=1$ for every $v\in V$ then every $DP$-Nash subgraph is a spanning vertex-disjoint collection of stars, each with at least two vertices.
If $\kappa(v)=d_G(v)$ for every $v\in V$ then the $D$-set of each $DP$-Nash subgraph of $(G, \kappa)$ is a maximal independent set of $G$.
It is well-known that a vertex set is maximal independent if and only if it is independent dominating. 
Since finding both a maximum size independent dominating set and a minimum size independent dominating set are both {\sf NP}-hard \cite{CornellP84}, so are the problems of finding a $D$-set of maximum and minimum size.
For more information on the complexity of independent domination, see \cite{GoddardH13}.

Below in Figure~\ref{fig:1} we present three capacitated graphs, $(G_1,\kkk_1)$, $(G_2,\kkk_2)$, and $(G_3,\kkk_3)$.
In each capacitated graph, the number adjacent to a vertex represents the capacity of that vertex.
Vertices labelled with an $x$ have capacity equal to their degree while vertices labelled with a $y$ have capacity strictly less than their degree and have a neighbour labelled with an $x$.
Any remaining vertices are labelled with a $z$.
(This labelling system is formally defined in \eqref{eq:XYZ}.)


\begin{figure}[hbt!]
\centering
\tikzstyle{vertexX}=[circle,draw, top color=gray!10, bottom color=gray!70, minimum size=10pt, scale=0.9, inner sep=0.5pt]
\tikzstyle{vertexY}=[circle,draw, top color=black!10, bottom color=gray!70, minimum size=15pt, scale=0.8, inner sep=0.4pt]
\begin{tikzpicture}[scale=0.3]
\draw (5,-1.5) node {{\small $(G_1,\kkk_1)$}};
\draw (1.0,1.0) node {{\small $1$}};
\draw (1.0,6.0) node {{\small $1$}};
\draw (9.0,6.0) node {{\small $1$}};
\draw (9.0,1.0) node {{\small $1$}};
\draw (5.0,4.9) node {{\small $3$}};
\node (x1) at (2.5,1.0) [vertexY] {$x_{4}$}; 
\node (x2) at (2.5,6.0) [vertexY] {$x_{1}$};
\node (x3) at (7.5,6.0) [vertexY] {$x_{2}$}; 
\node (x4) at (7.5,1.0) [vertexY] {$x_{3}$}; 
\node (y) at (5.0,3.5) [vertexY] {$y_{1}$};
\draw [thick] (y) -- (x1); 
\draw [thick] (y) -- (x2); 
\draw [thick] (y) -- (x3); 
\draw [thick] (y) -- (x4); 
\end{tikzpicture} \hfill 
\begin{tikzpicture}[scale=0.3]
\draw (5,-1.5) node {{\small $(G_2,\kkk_2)$}};
\draw (1.5,1.0) node {{\small $3$}};
\draw (8.5,1.0) node {{\small $1$}};
\draw (1.0,7.4) node {{\small $0$}};
\draw (5.0,7.4) node {{\small $0$}};
\draw (9.0,7.4) node {{\small $0$}};
\node (x) at (3.0,1.0) [vertexY] {$x$}; 
\node (z) at (7.0,1.0) [vertexY] {$z$}; 
\node (y1) at (1.0,6.0) [vertexY] {$y_{1}$}; 
\node (y2) at (5.0,6.0) [vertexY] {$y_{2}$}; 
\node (y3) at (9.0,6.0) [vertexY] {$y_{3}$}; 
\draw [thick] (x) -- (y1); 
\draw [thick] (x) -- (y2); 
\draw [thick] (x) -- (y3); 
\draw [thick] (z) -- (y1); 
\draw [thick] (z) -- (y2); 
\draw [thick] (z) -- (y3); 
\end{tikzpicture} \hfill
%
\begin{tikzpicture}[scale=0.3]
\draw (3.8,-1.5) node {{\small $(G_3,\kkk_3)$}};
\draw (1.0,6.0) node {{\small $2$}};
\draw (6.887,6.0) node {{\small $2$}};
\draw (8.33,3.5) node {{\small $2$}};
\draw (6.887,1.0) node {{\small $2$}};
\draw (1.0,1.0) node {{\small $2$}};
\draw (-0.43,3.5) node {{\small $2$}};
\node (z1) at (2.5,6.0) [vertexY] {$z_{1}$};
\node (z2) at (5.387,6.0) [vertexY] {$z_{2}$}; 
\node (z3) at (6.83,3.5) [vertexY] {$z_{3}$};
\node (z4) at (5.387,1.0) [vertexY] {$z_{4}$}; 
\node (z5) at (2.5,1.0) [vertexY] {$z_{5}$}; 
\node (z6) at (1.057,3.5) [vertexY] {$z_{6}$};
\draw [thick] (z1) -- (z2); 
\draw [thick] (z1) -- (z3); 
\draw [thick] (z1) -- (z4); 
\draw [thick] (z1) -- (z5); 
\draw [thick] (z1) -- (z6); 
\draw [thick] (z2) -- (z3); 
\draw [thick] (z2) -- (z4); 
\draw [thick] (z2) -- (z5); 
\draw [thick] (z2) -- (z6); 
\draw [thick] (z3) -- (z4); 
\draw [thick] (z3) -- (z5); 
\draw [thick] (z3) -- (z6); 
\draw [thick] (z4) -- (z5); 
\draw [thick] (z4) -- (z6); 
\draw [thick] (z5) -- (z6); 
\end{tikzpicture} \hfill
\caption{Three capacitated graphs $(G_1,\kkk_1)$, $(G_2,\kkk_2)$, and $(G_3,\kkk_3)$.}\label{fig:1}
\end{figure}

We will refer to the three capacitated graphs in Figure~\ref{fig:1} above on multiple occasions throughout the paper.
This is because, as we will show later, capacitated graph $(G_1,\kkk_1)$ has a unique Nash subgraph (and hence a unique $D$-set), capacitated graph $(G_2,\kkk_2)$ has three Nash subgraphs all with the same $D$-set, while capacitated graph $(G_3,\kkk_3)$ has multiple Nash subgraphs some of which have different $D$-sets.

In the rest of the paper, 
we will often write $G$ instead of $(G,\kappa)$ when the capacity function $\kappa$ is clear from the context.
We will often omit the subscript $G$ in $N_G(x)$ and $d_G(x)$ when the graph $G$ under consideration is clear from the context. 
We will often shorten the term $DP$-Nash subgraph to Nash subgraph. 

In the rest of this section, we provide two simple assumptions for the rest of the paper
which will allow us to simplify some of our proofs. 
In both assumptions,  $(G,\kappa)$ is a capacitated graph.

\begin{assumption}\label{ass:k<=d}
For all $u \in V$ we have $\kk{u} \leq d(u)$.
\end{assumption}
%
%

Assumption~\ref{ass:k<=d} states that every vertex has capacity no greater than its degree.
This assumption does not change the set of $DP$-Nash subgraphs of any capacitated graph as if $\kk{u} > d(u)$ we may let $\kk{u} = d(u)$ without changing $\min\{\kk{u},d(u)\}$.
Due to this assumption, we can simplify the definition of a $DP$-Nash subgraph in Definition~\ref{def:DPNash} by replacing the qualifier ``for every $x\in D$, $d_H(x)=\min\{d_G(x),\kappa(x)\}$'', with ``for every $x\in D$, $d_H(x)=\kappa(x)$''.
We note that Assumption~\ref{ass:k<=d} may not hold for a subgraph of $(G, \kappa)$ if the subgraph uses the same capacity function $\kappa$ restricted to its vertices.



\begin{assumption}\label{ass:notBothZero}
If $uv$ is an edge in $G$, then  $\kk{u} >0$ or $\kk{v} >0$
\end{assumption}

Assumption \ref{ass:notBothZero} states that no two vertices with capacity zero are adjacent.
This assumption does not change our problem due to the following result showing that any edge for which both end-vertices have capacity zero can effectively be ignored and treated as though it is not there.

\begin{proposition}
  Let $G^*$ be obtained from $G$ by deleting all edges $uv$ with $\kk{u}=\kk{v}=0$. Then
  $(D,P;E')$ is a Nash subgraph of $(G,\kkk)$ if and only if $(D,P;E')$ is a Nash subgraph of $(G^*,\kkk)$.
\end{proposition}
\begin{proof}
Let $uv$ be any edge in $G$ with $\kk{u}=\kk{v}=0$ and let $(D,P;E')$ be a Nash subgraph of $(G,\kkk)$.
Note that $uv \not\in E'$ as if $u \in D$ then $E'$ contains no edge incident with $u$ and if
$v\in D$ then $E'$ contains no edge incident with $v$ and if $u,v \in P$ then $E'$ does not contain the edge $uv$.
This implies $(D,P;E')$ is a Nash subgraph of $(G^*,\kkk)$.

Conversely if $(D,P;E')$ is a Nash subgraph of $(G^*,\kkk)$ then $(D,P;E')$ is a Nash subgraph of $(G,\kkk)$ as both graphs have the same capacity function and 
$G^*$ is a spanning subgraph of $G.$
\end{proof}

\section{Characterisation of capacitated Graphs with Unique $D$-set}\label{sec:char}

In this section we will fully characterise capacitated graphs that possess a unique $D$-set (see Theorem~\ref{thm:charDsetUnique}).
This characterisation will prove useful when we come to addressing our problems {\sc Nash Subgraph Uniqueness} and {\sc $D$-set Uniqueness} in Sections \ref{sec:DPs} and \ref{sec:Ds} respectively.

We begin by introducing some definitions and additional notation that will prove useful.
For a set $F$ of edges of a graph $H$ and a vertex $x$ of $H$, $N_F(x)=\{y\in V(H) \mid xy\in F\}$ and $d_F(x)=|N_F(x)|.$ 
For a vertex set $Q$ of a graph $H$, $N_H(Q)=\bigcup_{x\in Q}N_H(x).$

For a given capacitated graph $(G, \kkk)$ we define subsets of vertices $X(G,\kkk{})$, $Y(G,\kkk{})$ and $Z(G,\kkk{})$ as follows.
(If $(G,\kkk{})$ is clear from the context these sets will be denoted by $X$, $Y$ and $Z$, respectively.)

\begin{equation}\label{eq:XYZ}
\begin{array}{rcccl}
X & = &  X(G,\kkk{}) & := & \{ x \; | \; \kk{x} = d(x) \} \\
Y & = &  Y(G,\kkk{}) & := &N(X)\setminus X \\
Z & = &  Z(G,\kkk{}) & := & V(G) \setminus (X \cup Y) \\
\end{array}
\end{equation}

In words, $X(G,\kkk{})$ is the subset of vertices with capacity equal to degree, $Y(G,\kkk{})$ is the subset of vertices with capacity less than degree and that have at least one neighbour with capacity equal to degree capacity, and  $Z(G,\kkk{})$ are the remaining vertices.
The use of the symbols $x$, $y$ and $z$ to label vertices in the capacitated graphs of Figure~\ref{fig:1} should now be clear.


The following result considers $D$-sets from the ``local'' perspective of an individual vertex.
Specifically it shows that whether a vertex $u$ is part of some $D$-set is determined by how many vertices in the neighbourhood of $u$ have capacity equal to their degree, i.e., are contained in the subset of vertices $X(G,\kappa)$ as defined in \eqref{eq:XYZ}.


\begin{lemma} \label{lem_vertex_u}
Let $u \in V(G)$ and let $X=X(G,\kappa)$. If $|N_G(u) \cap X| \leq \kk{u}$ then there exists a Nash subgraph $(D,P;E)$ of $(G, \kappa)$ where
$u \in D$ and $N_G(u) \cap X \subseteq P$.
Furthermore, if $|N_G(u) \cap X| < \kk{u}$ and $w \in N_G(u) \setminus X$ then there exists a Nash subgraph $(D,P;E)$ of $(G, \kappa)$ where
$u \in D$ and $\{w\} \cup (N_G(u) \cap X) \subseteq P$. 
\end{lemma}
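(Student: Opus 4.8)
The plan is to produce the required Nash subgraph by essentially the construction used in Case~1 of the proof of Theorem~\ref{all_PD}, but seeded so that $u$ is forced to be a driver and the prescribed vertices are forced into the $P$-set. Write $T=N_G(u)\cap X$, and choose a leaf set $S$ for $u$ with $T\subseteq S\subseteq N_G(u)$ and $|S|=\kk{u}$; this is possible because $|T|\le\kk{u}\le d_G(u)$, the last inequality being Assumption~1. For the second (``furthermore'') statement we additionally insist that $w\in S$, which is possible since then $|T|<\kk{u}$ leaves room for the extra vertex $w\in N_G(u)\setminus X$. Let $B$ be the star with centre $u$ and leaf set $S$, and set $G'=G-(\{u\}\cup S)$. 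If $G'$ has no vertices then $B$ is already the desired Nash subgraph, with $D=\{u\}$ and $P=S$; otherwise apply Theorem~\ref{all_PD} to $G'$ (with capacities inherited from $\kkk$) to obtain a Nash subgraph $H'=(D',P';E')$.

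Next I would assemble $H$ from the disjoint union of $H'$ and $B$ exactly as in the proof of Theorem~\ref{all_PD}: put $D=D'\cup\{u\}$ and $P=P'\cup S$, and for every $v\in D'$ with $d_{G'}(v)<\kk{v}$ add $\kk{v}-d_{G'}(v)$ edges of $G$ joining $v$ to $N_G(v)\cap S$. Then $H=(D,P;E(H))$ is checked to be a Nash subgraph of $G$ by the now-familiar bookkeeping: $H$ is spanning; every edge of $H$ runs between $D$ and $P$ (edges of $E'$ between $D'$ and $P'$, star edges between $u$ and $S$, added edges between $D'$ and $S$), so $H$ is bipartite with the stated parts; no vertex of $P$ is isolated (vertices of $P'$ since $H'$ is a Nash subgraph, vertices of $S$ via their star edge to $u$); $d_H(u)=|S|=\kk{u}$; and for $v\in D'$ the relation $d_{H'}(v)=\min\{d_{G'}(v),\kk{v}\}$ together with the added edges (which are new, since they meet $S$ while $H'$ does not) yields $d_H(v)=\kk{v}$. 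By construction $u\in D$ and $T\subseteq S\subseteq P$, and $w\in S\subseteq P$ in the ``furthermore'' case.

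The only point needing a genuine argument --- and the one I expect to be the main obstacle --- is that the edges required in the assembly step really are available, i.e.\ that $|N_G(v)\cap S|\ge\kk{v}-d_{G'}(v)$ whenever $v\in D'$ and $d_{G'}(v)<\kk{v}$. Writing $a=|N_G(v)\cap\{u\}|\in\{0,1\}$ and $b=|N_G(v)\cap S|$, we have $d_{G'}(v)=d_G(v)-a-b$, so the inequality is equivalent to $\kk{v}\le d_G(v)-a$. If $a=0$ this is just Assumption~1. If $a=1$ then $v\in N_G(u)$, while $v\in D'\subseteq V(G')$ forces $v\notin S\supseteq T$, hence $v\notin X$, so $\kk{v}<d_G(v)$, i.e.\ $\kk{v}\le d_G(v)-1$. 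With this inequality in hand, the remaining verification is identical to that in Theorem~\ref{all_PD}.
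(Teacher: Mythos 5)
Your proposal is correct and follows essentially the same route as the paper's proof: seed a star at $u$ whose $\kk{u}$ leaves contain $N_G(u)\cap X$ (and $w$ in the second case), apply Theorem~\ref{all_PD} to the remaining graph, and top up deficient vertices of $D'$ with edges into the leaf set. Your final paragraph just spells out in more detail the availability argument that the paper compresses into the remark that each such $v$ either has no edge to $u$ or does not lie in $X$.
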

\begin{proof}
Let $u \in V(G)$ such that $|N_G(u) \cap X| \leq \kk{u}$. 
Recall that by Assumption~1 we have $\kk{v} \leq d_G(v)$ for all $v \in V(G)$.
Let $E^*$ denote an arbitrary set of $\kk{u}$ edges incident
with $u$, such that $N_G(u) \cap X \subseteq N_{E^*}(u)$. Let $T'= N_{E^*}(u)$, $G' = G \setminus (\{u\} \cup T')$ and 
 $(P',D';E')$ a Nash subgraph of $G'$, which exists by Theorem~\ref{all_PD}.  
Let $P=P' \cup  T'$ and let $D=D' \cup \{u\}$.

Initially let $\hat{E} = E' \cup E^*$. Clearly every vertex in $P$ has at least one edge into $D$. Now let $v \in D$ be arbitrary.  
If $d_{\hat{E}}(v) \not= \kk{v}$ (recall that $\kk{v} \leq d_G(v)$) 
then we observe that $v \in D'$ and $$d_{\hat{E}}(v) = d_{E'}(v) = \min\{d_{G'}(v),\kk{v}\} < \kk{v}.$$
Since $v$ either has no edge to $u$ or does not lie in $X$, observe that we can add $\kk{v} - d_{G'}(v)$ edges to $\hat{E}$ between $v$ and $T'$
resulting in $d_{\hat{E}}(v) = \kk{v}$. After doing the above for every $x \in D$ we obtain a 
Nash subgraph $(D,P;\hat{E})$ of $G$ with the desired properties. This completes our proof of the case $|N_G(u) \cap X| \leq \kk{u}.$ 
The same proof can be used for the case $|N_G(u) \cap X| < \kk{u}$ if we choose $E^*$ such that $w\in T'.$ 
\end{proof}

Lemma~\ref{lem_XZ_indep} below is the first step in how the subsets of vertices $X, Y$, and $Z$ as defined in \eqref{eq:XYZ} relate to the issue of unique $D$-sets.
It provides a simple sufficient condition, which is that if $X \cup Z$ is not independent in $G$, that the capacitated graph $(G, \kkk{})$ will have multiple $D$-sets.

\begin{lemma} \label{lem_XZ_indep}
If $X \cup Z$ is not an independent set in $(G, \kappa)$, then there exist \PDdecomps{} of $G$ with different $D$-sets.
If $X \cup Z$ is an independent set in $G$ then there exists a \PDdecomp{} $(D,P;E')$ of $(G, \kappa)$ 
where $D=X \cup Z$ and $P=Y$.
\end{lemma}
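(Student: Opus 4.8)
The plan is to prove the two assertions separately; the second is a direct construction and the first a short case analysis resting on Lemma~\ref{lem_vertex_u}, so I would state and dispatch the second assertion first.

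For the second assertion, assume $X\cup Z$ is independent. The first thing to record is that then every neighbour of a vertex of $X\cup Z$ lies in $Y$: a vertex of $X$ has no neighbour in $X\cup Z$ by independence, hence all of its neighbours lie in $Y=V\setminus(X\cup Z)$; a vertex of $Z$ has no neighbour in $X$ by the definition of $Z$ and none in $Z$ by independence, so again all of its neighbours lie in $Y$. With this in hand I would take $D=X\cup Z$, $P=Y$, and let $E'$ consist of all edges incident with $X$ together with, for each $z\in Z$, an arbitrarily chosen set of $\kk{z}$ of the $d_G(z)$ edges at $z$ (possible by Assumption~1). Then $(V,E')$ is spanning and bipartite with parts $D,P$, since every edge of $E'$ joins $X$ or $Z$ to $Y$; each $x\in X$ has degree $d_G(x)=\kk{x}$ in it and each $z\in Z$ has degree $\kk{z}$; and each $y\in Y=N(X)\setminus X$ has a neighbour in $X$, so the corresponding edge lies in $E'$ and $y$ is not isolated. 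Hence $(D,P;E')$ is a Nash subgraph of the required form.

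For the first assertion, assume $X\cup Z$ is not independent and fix an edge $uv$ with $u,v\in X\cup Z$. Since $Y$ is disjoint from $X\cup Z$ and a vertex of $Z$ has no neighbour in $X$, I would first argue that either $u,v\in X$ or $u,v\in Z$. The target is then to exhibit one Nash subgraph whose $D$-set contains $u$ and one whose $D$-set avoids $u$ (after a harmless relabelling of $u$ and $v$ in the $Z$-case), which produces two Nash subgraphs with distinct $D$-sets. A Nash subgraph with $u$ in the $D$-set comes straight from the first part of Lemma~\ref{lem_vertex_u}, since $|N_G(u)\cap X|\le\kk{u}$ (it equals $d_G(u)=\kk{u}$ if $u\in X$, and $N_G(u)\cap X=\emptyset$ if $u\in Z$). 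For a Nash subgraph with $u$ outside the $D$-set: if $u,v\in X$ I would apply the first part of Lemma~\ref{lem_vertex_u} to $v$, getting a Nash subgraph with $v$ in the $D$-set and $N_G(v)\cap X\subseteq P$, which contains $u$; if $u,v\in Z$ I would use Assumption~2 to assume, by symmetry, that $\kk{v}>0$, and apply the second part of Lemma~\ref{lem_vertex_u} to $v$ with $w=u$, getting a Nash subgraph with $v$ in the $D$-set and $u\in P$.

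The bookkeeping about the partition $V=X\cup Y\cup Z$ carries most of the argument; the one genuinely delicate point is the $u,v\in Z$ subcase. One cannot feed $u$ itself to the second part of Lemma~\ref{lem_vertex_u}, because $\kk{u}$ may be $0$, so it is essential to invoke Assumption~2 in order to push into a $P$-set the endpoint of $uv$ that has positive capacity, while still using the capacity-free first part of the lemma to obtain the companion Nash subgraph that keeps that same endpoint in its $D$-set.
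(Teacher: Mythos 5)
Your proposal is correct and follows essentially the same route as the paper: the same construction $D=X\cup Z$, $P=Y$ with all $X$--$Y$ edges plus $\kk{z}$ chosen edges per $z\in Z$ for the independent case, and the same case split ($u,v\in X$ versus $u,v\in Z$) using both parts of Lemma~\ref{lem_vertex_u} together with Assumption~2 to place the positive-capacity endpoint in a $D$-set and the other endpoint in a $P$-set. The point you flag as delicate (invoking Assumption~2 because $\kk{u}$ may be $0$) is exactly how the paper handles that subcase.
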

\begin{proof}
First assume that $X \cup Z$ is not independent in $G$ and that $uv$ is an edge where $u,v \in X \cup Z$. 
By the definition of $Z$ we have that $u,v \in X$ or $u,v \in Z$.

First consider the case when $u,v \in X$. Note that $|N_G(u) \cap X| \leq d(u) = \kk{u}$, which by Lemma~\ref{lem_vertex_u}
implies that there is a Nash subgraph $(D',P';E')$ in $G$ where $u \in D'$ and $v \in P'$. Analogously, we can obtain a 
Nash subgraph $(D'',P'';E'')$ in $G$ where $v \in D''$ and $u \in P''$, which implies that there exist Nash subgraphs of $G$ with different $D$-sets, as
desired.

We now consider the case when $u,v \in Z$. As $uv$ is an edge in $G$ we may without loss of generality assume that $\kk{v} \geq 1$ (by Assumption~2).
As $|N_G(v) \cap X| = 0 < \kk{v}$,  Lemma~\ref{lem_vertex_u}
implies that there is a Nash subgraph $(D',P';E')$ in $G$ where $v \in D'$ and $u \in P'$. 
As  $|N_G(u) \cap X| = 0 \leq \kk{u}$,  Lemma~\ref{lem_vertex_u}
implies that there is a Nash subgraph $(D'',P'';E'')$ in $G$ where $u \in D''$.
As there exist Nash subgraphs where $u \in P'$ and where $u \in D''$, we are done in this case.

Now let $X \cup Z$ be independent in $G$ and let $P=Y$ and $D=X \cup Z$. Let $E'$ contain all edges between 
$X$ and $Y$ as well as any $\kk{z}$ edges from $z$ to $P$ for all $z \in Z$. As $Y = N(X) \setminus X$ we conclude that 
$(D,P;E')$ is a Nash subgraph of $(G, \kappa)$. 
\end{proof}

To state our characterisation result for capacitated graphs possessing a unique $D$-set, we need some additional definitions and two properties.

Given a capacity $\kappa$ on a graph $G$, for any subset $U \subseteq V(G)$ let $U^{\kkk{}}$ denote a set of vertices obtained from $U$ by replacing each vertex, $u \in U$, by $\kk{u}$ copies.
Note that if $\kk{u}=0$ then the vertex $u$ is not in $U^{\kkk{}}$ and
$|U^{\kkk{}}| = \sum_{u \in U} \kk{u}$.


\begin{definition}
Given a capacitated graph $(G, \kappa)$, we define its {\it auxiliary graph}, $G^{\aux}(G, \kappa)$ (or simply $G^{\aux}$ when $(G, \kappa)$ is understood), as the bipartite graph with partite sets $R' = X \cup Z$ and $Y' = Y^{\kkk{}}$.
For a vertex $y\in Y,$ there is an edge from a copy of $y$ to $r \in R'$ in $G^{\aux}$ if and only if
there is an edge from $y$ to $r$ in $G$. 
\end{definition}

Let $Y^{\kkk{}>0} \subseteq Y$ consist of all vertices $y\in Y$ with $\kk{y}>0$.
For every set $\emptyset \not= W \subseteq Y^{\kkk{}>0}$ let $$L(W) = \{ v \in X \cup Z \; | \; |N(v) \cap W| > d(v)-\kk{v} \}.$$

Let us return to the three examples of capacitated graphs in Figure~\ref{fig:1} and consider $G^{\aux}$ for each.
We note that $G^{\aux}(G_2,\kkk_2)$ is simply the two isolated vertices $x$ and $z$, and that $G^{\aux}(G_3,\kkk_3) = G_3$.

In Figure~\ref{fig:2} below, the graph on the left is the capacitated graph $(G_1,\kkk_1)$ repeated and the graph on the right is its auxiliary graph $G^{\aux}(G_1,\kkk_1)$.
Since $\kkk_{1}(y) = 3$ in $G^{\aux}(G_1,\kkk_1)$ we have that $\abs{Y'} = \abs{Y^{\kkk_{1}}} = 3$ in $G^{\aux}$.


\begin{figure}[hbt!]
\centering
\tikzstyle{vertexX}=[circle,draw, top color=gray!10, bottom color=gray!70, minimum size=10pt, scale=0.9, inner sep=0.5pt]
\tikzstyle{vertexY}=[circle,draw, top color=black!10, bottom color=gray!70, minimum size=15pt, scale=0.8, inner sep=0.4pt]
\hfill
\begin{tikzpicture}[scale=0.3]
\draw (5,-1.5) node {{\small $(G_1,\kkk_1)$}};
\draw (1.0,1.0) node {{\small $1$}};
\draw (1.0,6.0) node {{\small $1$}};
\draw (9.0,6.0) node {{\small $1$}};
\draw (9.0,1.0) node {{\small $1$}};
\draw (5.0,4.9) node {{\small $3$}};
\node (x1) at (2.5,1.0) [vertexY] {$x_{4}$}; 
\node (x2) at (2.5,6.0) [vertexY] {$x_{1}$};
\node (x3) at (7.5,6.0) [vertexY] {$x_{2}$}; 
\node (x4) at (7.5,1.0) [vertexY] {$x_{3}$}; 
\node (y) at (5.0,3.5) [vertexY] {$y$};
\draw [thick] (y) -- (x1); 
\draw [thick] (y) -- (x2); 
\draw [thick] (y) -- (x3); 
\draw [thick] (y) -- (x4); 
\end{tikzpicture} \hfill 
\begin{tikzpicture}[scale=0.3]
\draw (5,-1.5) node {{\small $G^{\aux}(G_1,\kkk_1)$}};
\node (x1) at (-1.0,1.0) [vertexY] {$x_{1}$}; 
\node (x2) at (3.0,1.0) [vertexY] {$x_{2}$}; 
\node (x3) at (7.0,1.0) [vertexY] {$x_{3}$}; 
\node (x4) at (11.0,1.0) [vertexY] {$x_{4}$}; 
\node (y1) at (1.0,6.0) [vertexY] {$y_{1}$}; 
\node (y2) at (5.0,6.0) [vertexY] {$y_{2}$}; 
\node (y3) at (9.0,6.0) [vertexY] {$y_{3}$}; 
\draw [thick] (x1) -- (y1); 
\draw [thick] (x1) -- (y2); 
\draw [thick] (x1) -- (y3); 
\draw [thick] (x2) -- (y1); 
\draw [thick] (x2) -- (y2); 
\draw [thick] (x2) -- (y3); 
\draw [thick] (x3) -- (y1); 
\draw [thick] (x3) -- (y2); 
\draw [thick] (x3) -- (y3); 
\draw [thick] (x4) -- (y1); 
\draw [thick] (x4) -- (y2); 
\draw [thick] (x4) -- (y3); 
\end{tikzpicture} \hfill \hfill 
\caption{Capacitated graph $(G_1,\kkk_1)$ and its auxiliary graph $G^{\aux}$.}\label{fig:2}
\end{figure}

As we will soon show, determining whether or not a capacitated graph $(G,\kkk)$ has a unique $D$-set turns out to be equivalent to whether or not there are various matchings of particular sizes in $G^{\aux}$.
To describe these types of matchings we define the following two properties $M^*(G,\kkk{})$ and $M^{**}(G,\kkk{})$.

\begin{description}
 \item[Property $\mathbf{M^*(G,\kkk{})}$:] holds if for every set $\emptyset \not= W \subseteq Y^{\kkk{}>0}$ there is no matching from $L(W)$ to $W^{\kkk{}}$ of size $|L(W)|$
in $G^{\aux}.$ 
 \item[Property $\mathbf{M^{**}(G,\kkk{})}$:] holds if for every set $\emptyset \not= W \subseteq Y^{\kkk{}>0}$ we have $|L(W)|>|W^{\kkk{}}|$.
\end{description}

The next theorem fully characterises capacitated graphs possessing a unique $D$-set.

\begin{theorem}\label{thm:charDsetUnique}
Let $(G, \kkk{})$ be a capacitated graph.
If vertex subset $X \cup Z$ is not independent in $G$ then $(G,\kappa)$ has at least two different $D$-sets.
If vertex subset $X \cup Z$ is independent in $G$ then the following three statements are equivalent:
\begin{description}
\item[(a)] $(G,\kkk{})$ has a unique $D$-set;
\item[(b)] $M^*(G,\kkk{})$ holds;
\item[(c)] $M^{**}(G,\kkk{})$ holds.
\end{description}
\end{theorem}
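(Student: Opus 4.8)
The first sentence of the statement is exactly the first sentence of Lemma~\ref{lem_XZ_indep}, so nothing new is required for it. Assume henceforth that $X\cup Z$ is independent in $G$; then the second sentence of Lemma~\ref{lem_XZ_indep} already exhibits a \PDdecomp{} whose $D$-set is $X\cup Z$, so (a) is equivalent to: $X\cup Z$ is the \emph{only} $D$-set of $(G,\kkk{})$. I will prove the three-way equivalence by the cyclic chain: failure of (a) implies failure of (b), failure of (b) implies failure of (c), and failure of (c) implies failure of (a).

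For the first implication, let $(D,P;E')$ be a Nash subgraph with $D\neq X\cup Z$ and set $W=D\cap Y^{\kkk{}>0}$. First, $W\neq\emptyset$: if $(X\cup Z)\cap P\neq\emptyset$, then (independence) any such vertex has all its neighbours in $Y$ and, being non-isolated, a non-isolated—hence capacity-positive—neighbour in $D\cap Y\subseteq W$; and if $(X\cup Z)\cap P=\emptyset$ then $X\cup Z\subseteq D$, so $D\cap Y\neq\emptyset$ (otherwise $D=X\cup Z$), while no $y\in D\cap Y$ can have $\kk{y}=0$ since its neighbour $x\in X\subseteq D$ would force $xy$ into $E'$. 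Second, $L(W)\subseteq P$: an $x\in X$ in $D$ adjacent to $W$ is impossible because $d_{E'}(x)=\kk{x}=d(x)$ forces the edge from $x$ into $D$; and a $z\in Z$ in $D$ with $|N(z)\cap W|>d(z)-\kk{z}$ is impossible because at $z$ only the fewer than $\kk{z}$ edges to $V\setminus W$ are available. Finally, choose for each $v\in L(W)$ an $E'$-edge from $v$ to $D\cap Y$; its other endpoint is non-isolated, hence lies in $W$, and each $y\in W$ is chosen for at most $d_{E'}(y)=\kk{y}$ vertices $v$, so routing the chosen edges to distinct copies of their $W$-endpoints gives a matching from $L(W)$ to $W^{\kkk{}}$ of size $|L(W)|$ in $G^{\aux}$; thus $M^*(G,\kkk{})$ fails.

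The second implication is immediate: a matching from $L(W)$ to $W^{\kkk{}}$ of size $|L(W)|$ uses $|L(W)|$ distinct vertices of $W^{\kkk{}}$, so $|L(W)|\le|W^{\kkk{}}|$, contradicting $O^*(G,\kkk{})$.

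The third implication, failure of (c) implies failure of (a), is the substantive part. Fix an inclusion-minimal nonempty $W\subseteq Y^{\kkk{}>0}$ with $|L(W)|\le|W^{\kkk{}}|$, and aim for a Nash subgraph with $D$-set $D=((X\cup Z)\setminus L(W))\cup W$ (so $D\neq X\cup Z$, as $\emptyset\neq W\subseteq Y$) and $P=(Y\setminus W)\cup L(W)$: take all edges at each $x\in X\setminus L(W)$ (all landing in $Y\setminus W$, since $x$ has no neighbour in $W$), take $\kk{z}$ edges from each $z\in Z\setminus L(W)$ into $Y\setminus W$ (possible because $|N(z)\cap W|\le d(z)-\kk{z}$), and use one $W$-neighbour of each $v\in L(W)$ to keep $v$ non-isolated. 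What remains is to give each $y\in W$ exactly $\kk{y}$ edges to $P$ while keeping every vertex of $Y\setminus W$ non-isolated; I would phrase this as a feasibility question for a flow/$b$-matching on an auxiliary bipartite graph built from $G^{\aux}$ (enlarged to record that no $y'\in Y\setminus W$ all of whose $X$-neighbours lie in $L(W)$ may be isolated), and verify the governing cut/Hall condition—here minimality of $W$ bounds, for each $y\in W$, the number of neighbours of $y$ in $W\cup(Z\setminus L(W))$, which secures $\kk{y}$ admissible partners in $P$, with any residual shortfall removed by relocating a bounded set of $Z$-neighbours of $W$-vertices from $D$ to $P$ (harmless, as each retains an edge to $W\subseteq D$). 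I expect this feasibility verification to be the main obstacle: one must simultaneously satisfy an \emph{exact} degree at every vertex of $W$, keep every passenger non-isolated, and use only edges not excluded by $W\subseteq D$, and it is this interplay that forces both the minimal choice of $W$ and the careful accounting with $L(\cdot)$.
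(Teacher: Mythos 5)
Your reduction of the first sentence to Lemma~\ref{lem_XZ_indep}, your first implication ($\neg$(a)~$\Rightarrow$~$\neg$(b)) and your second ($\neg$(b)~$\Rightarrow$~$\neg$(c)) are all correct. The first is in fact a nice sharpening of the paper's argument for (c)~$\Rightarrow$~(a): from a Nash subgraph with $D\neq X\cup Z$ you extract, for $W=D\cap Y^{\kkk{}>0}$, not merely the count $|L(W)|\le |W^{\kkk{}}|$ but an explicit matching saturating $L(W)$ (by routing, for each $v\in L(W)\subseteq P$, one $E'$-edge into $W$ and noting each $y\in W$ receives at most $d_{E'}(y)=\kk{y}$ of them). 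The second implication is the trivial cardinality observation. So far so good.

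The gap is in your third implication, $\neg$(c)~$\Rightarrow$~$\neg$(a), which is the only place a second Nash subgraph must actually be built, and your sketch does not build it. Three specific problems. First, the hypothesis $|L(W)|\le|W^{\kkk{}}|$ is a global count; to cover every $v\in L(W)$ by an edge into $W$ while charging each $y\in W$ at most $\kk{y}$ times you need a genuine matching from $L(W)$ into $W^{\kkk{}}$, and converting the count into a matching is exactly the paper's Claim~B (Hall's theorem applied to a deficient set $S$, pass to $W'=W\setminus N_G(S)$, contradict minimality of $W$); ``one $W$-neighbour of each $v\in L(W)$'' does not respect the capacities at $W$. Second, giving each $y\in W$ \emph{exactly} $\kk{y}$ edges into $P$ requires $y$ to have at least $\kk{y}$ neighbours outside $D$; this can fail outright (e.g.\ most of $N(y)$ could lie in $W$ itself), and the paper secures it only by arguing by contradiction from (a): Subclaim~A.1 shows that if (a) holds then $|N(y)\cap X|\ge\kk{y}+1$ for every $y\in Y$, and Subclaim~A.2 shows $N(y)\cap X\subseteq L(W)\subseteq P$. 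Your direct construction never invokes the assumption that (a) holds (or the dichotomy ``either two $D$-sets already exist, or every $y\in Y$ has $\ge\kk{y}+1$ $X$-neighbours''), so this step is unsupported. Third, your prescribed target $D=((X\cup Z)\setminus L(W))\cup W$ need not be a $D$-set at all: a vertex $y'\in Y\setminus W$ whose entire neighbourhood lies in $L(W)\cup W\cup(Y\setminus W)$ with no admissible edge into your $D$ cannot be a non-isolated member of $P$. The paper avoids committing to a specific $D$-set by applying Theorem~\ref{all_PD} to $G-(L(W)\cup W)$ and patching (Subclaims~A.4 and~A.5), which allows vertices of $Y\setminus W$ to land in $D_2$. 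To close your argument you would essentially have to reprove the paper's Claims~A and~B inside this one step; the ``flow feasibility'' you defer is not a routine verification but the core of the theorem.
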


\begin{proof}
The case of  $X \cup Z$ being not independent  follows from Lemma~\ref{lem_XZ_indep}. We will therefore assume that $X \cup Z$ is independent in $G$
and prove the rest of the theorem by showing that (a) $\Rightarrow$ (b) $\Rightarrow$ (c) $\Rightarrow$ (a).
The following three claims complete the proof.

\2

{\bf Claim A: (a) $\Rightarrow$ (b).}  

\2

{\em Proof of Claim A:} Suppose that (a) holds but (b) does not.
As (b) is false, $M^*(G,\kkk{})$ does not hold, which implies that there exists a $\emptyset \not= W \subseteq Y^{\kkk{}>0}$
such that there is a matching, $M$, from $L(W)$ to $W^{\kkk{}}$ of size $|L(W)|$ in $G^{\aux}$.

Let $D_1 = W$, $P_1 = L(W)$ and $G_2 = G - (P_1 \cup D_1)$.
Let $(D_2,P_2;E_2')$ be a \PDdecomp{} of $G_2$, which exists by Theorem~\ref{all_PD}. We will now prove the following six subclaims.

\2

{\bf Subclaim~A.1:} For every $y \in Y,$ we have $|N(y) \cap X| \geq \kk{y}+1$.

\2

{\em Proof of Subclaim A.1:} Assume that Subclaim~A.1  is false and there exists a vertex $y \in Y$ such that $|N(y) \cap X| \leq \kk{y}$.
By Lemma~\ref{lem_vertex_u}, there exists a Nash subgraph $(D',P';E')$ in $G$ where $y \in D'$.
By Lemma~\ref{lem_XZ_indep}, there exists a Nash subgraph $(D'',P'';E'')$ in $G$ where $y \in P''$ (as $P''=Y$).
Therefore (a) is false, a contradiction. $\diamond$

\2

{\bf Sublaim~A.2:} If $u \in D_1 = W$ then $N(u) \cap X \subseteq P_1$. 
Furthermore, $|N(u) \cap P_1| \geq \kk{u}+1$.

\2

{\em Proof of Subclaim~A.2:} Let $u \in D_1$ (and therefore $u \in W$) 
be arbitrary and let $r \in N(u) \cap X$ be arbitrary. We will show that $r \in P_1$,
which will prove the first part of the claim.
As $r \in X$ we have $d_G(r)=\kk{r}$.  This implies that $|N(r) \cap W| \geq 1 > 0 = d_G(r) - \kk{r}.$ Hence, $r \in L(W) = P_1$ 
as desired. 

We now prove the second part of Subclaim~A.2.
Since $u \in Y,$ Subclaim~A.1 implies that $|N(u) \cap X| \geq \kk{u}+1$. 
As every vertex in $N(u) \cap X$ also belongs to $P_1,$ we have $|N(u) \cap P_1| \geq \kk{u}+1$. $\diamond$

\2

{\bf Subclaim~A.3:} There exists a Nash subgraph $(D_1,P_1;E_1')$ of $\induce{G}{D_1 \cup P_1}$.

\2

{\em Proof of Subclaim~A.3:}  $P_1$ and $D_1$ were defined earlier so we will now define $E_1'$.  Let all edges of the matching 
$M$ belong to $E_1'$.  That is if $u'v \in M$ and $u' \in V(G^{\aux})$ is a copy of $u \in V(G)$, then add the edge $uv$ to $E_1'$.
We note that every vertex in $P_1$ is incident to exactly one of the edges added so far and every vertex $u\in D_1$ is incident 
to at most $\kk{u}$ such edges. By Subclaim~A.2 we can add further edges between $P_1$ and $D_1$ such that 
every vertex $u\in D_1$ is incident with exactly $\kk{u}$ edges from $E_1'$. $\diamond$ 

\2

{\bf Subclaim~A.4:} Every $u \in (X \cup Z) \setminus L(W)$ has at least $\kk{u}$ neighbours in $Y \setminus W$.

\2

{\em Proof of Subclaim~A.4:} As $u \not\in L(W)$ we
have that $|N_G(u) \cap W| \leq d(u)-\kk{u}$. This implies that
$|N_G(u) \setminus W| \geq \kk{u}$.
As $X \cup Z$ is independent this implies that $u$ has at least $\kk{u}$ neighbours in $Y \setminus W,$ as desired. $\diamond$

\2
Recall that $(D_2,P_2;E_2')$ is a Nash subgraph of $G_2$ and by Subclaim~A.3, $(D_1,P_1;E_1')$ is a \PDdecomp{} of $\induce{G}{D_1 \cup P_1}$.

\2

{\bf Subclaim~A.5:} There exists a Nash subgraph $(P_1 \cup P_2,D_1 \cup D_2,E_1' \cup E_2' \cup E^*)$ of $G$
for some $E^*$.

\2

{\em Proof of Subclaim~A.5:} 
Let $P=P_1 \cup P_2$, $D=D_1 \cup D_2$ and $E'=E_1' \cup E_2'$.
Clearly every vertex in $P$ is incident with an edge in $E'$.  

First consider a vertex $u \in D_1$. 
By Subclaim~A.2, $u$ has at least $\kk{u}+1$ neighbours in $P_1$ in $G$. Therefore, 
by Subclaim~A.3, $u$ is incident with exactly $\kk{u}$ edges of $E_1'$ and so also with $\kk{u}$ edges of $E'$.

Now consider $u \in D_2$. Note that $u$ is incident with $\min\{\kk{u},d_{G_2}(u)\}$ edges of $E_2'$.
If $u \in (X \cup Z) \setminus L(W)$ then by Subclaim~A.4, $\min\{\kk{u},d_{G_2}(u)\} = \kk{u}$ 
implying that $u$ is incident with exactly $\kk{u}$ edges of $E'$ as desired. 
We may therefore assume that $u \not\in (X \cup Z) \setminus L(W)$, which implies that $u \in Y \setminus W$.
By Subclaim~A.1, $u$ has at least $\kk{u}+1$ neighbours in $X$ in $G$. Therefore, $d_{G_2}(u) + |N(u) \cap P_1| \geq \kk{u}+1$
(as every edge from $u$ to $X$ is counted in the sum on the left hand side of the inequality).
Thus, if $\min\{\kk{u},d_{G_2}(u)\} < \kk{u}$, then we can add edges from $u$ to $P_1$ to $E'$ until 
$u$ is incident with exactly $\kk{u}$ edges of $E'$. Continuing the above process for all $u$ and letting $E^*$ be the added 
edges, we obtain the claimed result. $\diamond$

\2

{\bf Subclaim~A.6:} Claim A holds. 

\2

{\em Proof of Subclaim~A.6:} By Lemma~\ref{lem_XZ_indep} there exists a \PDdecomp{}, $(D,P,E')$,
with $D=X \cup Z$ and $P=Y$. By Subclaim~A.5, 
there exists a \PDdecomp{} of $G$ where some vertices of $Y$ belong to $D$, contradicting the fact that (a) holds.
This completes the proof of Subclaim~A.6, and therefore also of Claim~A. $\diamond$

\2
\2

{\bf Claim B: (b) $\Rightarrow$ (c).} 

\2

{\em Proof of Claim B:} Suppose that (b) holds but (c) does not.
As (c) does not hold there exists a $\emptyset \not= W \subseteq Y^{\kkk{}>0}$
such that $|L(W)| \leq |W|$. Assume that $W$ is chosen such that $|W|$ is minimum possible with this property.
As (b) holds there is no matching between $W^{\kkk{}}$ and $L(W)$ in $G^{\aux}$ saturating every vertex of $L(W)$.
By Hall's Theorem, this implies that there exists a set $S \subseteq L(W)$ such that $|N_{G^{\aux}}(S)|<|S|$.
Note that $N_G(S) \subseteq W$ such that $N_{G^{\aux}}(S)$ contains exactly the copies of $N_G(S)$.
Note that $|N_G(S)| \leq |N_{G^{\aux}}(S)|<|S|$ as $W \subseteq Y^{\kkk{}>0}$. 
Let $W' = W \setminus N_G(S)$. 

By definition we have $$L(W') = \{ x \in X  \cup Z \; | \; |N_G(x) \cap W'| > d(x)-\kk{x} \}.$$
We will now prove the following subclaim.

\2

{\bf Subclaim~B.1:} $L(W') \subseteq L(W) \setminus S$.

\2

{\em Proof of Subclaim~B.1:} Let $u \in L(W')$ be arbitrary and note that 
$|N_G(u) \cap W| \geq |N_G(u) \cap W'| > d(u)-\kk{u}.$
This implies that $u \in L(W)$. 

We will now show that $u \not\in S$.  If $u \in S$, 
then $N(u) \subseteq N(S)$, so $u$ has no neighbours in $W' = W \setminus N(S)$.
Therefore, $|N_G(u) \cap W'| = 0$, and as we assumed that $d(v) \geq \kk{v}$ for all $v \in V(G)$, the following holds

\[
|N_G(u) \cap W'| = 0 \leq d(u)-\kk{u}.
\]

Therefore, $u \not\in L(W')$, a contradiction.  This implies that $u \not\in S$ and therefore 
$L(W') \subseteq L(W) \setminus S$. $\diamond$

\2

By Subclaim~B.1, we have that $|L(W')| \leq |L(W)|-|S| < |L(W)| - |N_G(S)| = |W'|$.
This contradicts the minimality of $|W|$, and therefore completes the proof of Claim~B.

\2
\2

{\bf Claim C: (c) $\Rightarrow$ (a).} 

\2

{\em Proof of Claim C:}
Suppose that (c) holds but (a) does not.
By Lemma~\ref{lem_XZ_indep} and the fact that (a) does not hold, there exists a Nash subgraph $(D,P;E')$ of $G$ such that  $D \not= X \cup Z$. 

If $Y^{\kkk{}>0} \subseteq P$, then no vertex of $X \cup Z$ can belong to $P$ as it would have no edge to $D$ (as $X \cup Z$ is independent).
Therefore, $X \cup Z \subseteq D$ in this case. Due to the definition of $X$ (and $Y$) and the fact that $X \subseteq D$, we have that $Y \subseteq P$, 
which implies that $D=X \cup Z$ and $Y=P$, which is a contradiction to our assumption that $D \not= X \cup Z$.

So we may assume that $Y^{\kkk{}>0} \not\subseteq P$. This implies that $Y^{\kkk{}>0} \cap D \not= \emptyset$.
Let $W = Y^{\kkk{}>0} \cap D$.  We now prove the following subclaim.

\2

{\bf Subclaim~C.1:} $L(W) \subseteq P$.

\2

{\em Proof of Subclaim~C.1:} Let $w \in L(W)$ be arbitrary.
Hence, $|N_G(w) \cap W| > d(w) - \kk{w}$.
If $w \in D$, then $w$ has at least $\kk{w}$ neighbours in $P$ in $G$.
By the above it has at least $d(w) - \kk{w} + 1$ neighbours in $W \subseteq D$, contradicting the
fact that $w$ has $d(w)$ neighbours.
This implies that $w \not\in D$.
Therefore, $w \in P$ and as $w \in L(W)$ is arbitrary, we must have $L(W) \subseteq P$.
$\diamond$

\2

We now return to the proof of Claim~C. 
Recall that $X \cup Z$ is independent and $L(W) \subseteq X \cup W$ and every vertex in $P$ has at least one edge to $D$ in $E'$.
By Subclaim~C.1, $L(W) \subseteq P$, which implies that there are at least $|L(W)|$ edges from $L(W)$ to $W$, 
as $W = Y^{\kkk{}>0} \cap D$. As there are at most $\theta = \sum_{w \in W} \kk{w}$ edges from $W$ to $L(W)$ we must have
$
|L(W)| \leq \theta = \sum_{w \in W} \kk{w} = |W^{\kkk{}}|.
$

The above is a contradiction to (c).
This completes the proof of Claim~C and therefore also of the theorem. 
\end{proof}

We immediately have the following:

\begin{corollary}\label{mainCOR}
All Nash subgraphs of $(G,\kappa)$ have the same $D$-set $D$ if and only if $D=X \cup Z$, $D$ is independent in $G,$ and $M^{**}(G,\kkk{})$ holds.
\end{corollary}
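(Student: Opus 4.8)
The plan is to read the corollary off directly from the Theorem immediately preceding it, after making one trivial observation. The phrase ``all Nash subgraphs of $(G,\kappa)$ have the same $D$-set'' is merely a reformulation of ``$G$ has a unique $D$-set'': by definition a vertex set $B$ is a $D$-set of $(G,\kappa)$ precisely when it is the $D$-set of some Nash subgraph, so the collection of Nash subgraphs all share one $D$-set exactly when there is a unique $D$-set. With this identification, the corollary is equivalent to the statement ``$G$ has a unique $D$-set if and only if $X\cup Z$ is independent in $G$ and $O^*(G,\kkk{})$ holds.''

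First I would handle the forward implication. Assume $G$ has a unique $D$-set. By the first assertion of the Theorem, if $X\cup Z$ were not independent in $G$ then $(G,\kappa)$ would have at least two different $D$-sets, contradicting uniqueness; hence $X\cup Z$ is independent. Now the hypothesis of the equivalence part of the Theorem is satisfied, so statements (a), (b), (c) are equivalent, and since (a) holds, in particular (c) holds, i.e.\ $O^*(G,\kkk{})$ holds.

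Conversely, assume $X\cup Z$ is independent in $G$ and $O^*(G,\kkk{})$ holds. Then again the equivalence (a)$\Leftrightarrow$(b)$\Leftrightarrow$(c) of the Theorem applies, and since (c) holds so does (a); that is, $G$ has a unique $D$-set, equivalently all Nash subgraphs of $(G,\kappa)$ have the same $D$-set. This completes both directions.

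There is essentially no obstacle here: the corollary is a bookkeeping consequence of the Theorem. The only point worth making explicit in the write-up is the elementary translation between ``unique $D$-set'' and ``all Nash subgraphs have the same $D$-set'', which is immediate from the definition of a $D$-set; everything else is a direct invocation of the two assertions of the Theorem.
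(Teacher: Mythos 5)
Your proof is correct and matches the paper exactly: the paper states Corollary~\ref{mainCOR} as an immediate consequence of the preceding theorem, which is precisely your argument of combining its two assertions with the trivial equivalence between ``unique $D$-set'' and ``all Nash subgraphs have the same $D$-set.'' Nothing further is needed.
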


Note that if $\kk{x}=0$ for all $x \in G$ then  $X=V(G)$ and $Y=\emptyset$.
In this case $M^{**}(G,\kkk{})$ vacuously holds and there is a unique Nash subgraph of $(G, \kappa)$ with $D$-set $V$ and empty $P$-set.

We conclude this section with the following result relating capacitated graphs that admit a unique $D$-set to maximum independent sets of the underlying graph.

{
\begin{proposition}\label{prop:uniqueDmaxInd}
Let $(G, \kkk)$ be a capacitated graph with $\kk{v} > 0$ for all $v \in V(G)$.
If $(G, \kkk)$ has a unique $D$-set $D$, then $D$ is the unique maximum independent set of the graph $G$.
\end{proposition}
}
\begin{proof}
Let $X$, $Y$, and $Z$ be defined as in \eqref{eq:XYZ}, and let us suppose that $(G, \kkk)$ has a unique $D$-set, $D$.
By Corollary~\ref{mainCOR} above we have that $D = X \cup Z$ and that $D$ is independent.
Again by Corollary~~\ref{mainCOR}, we have that condition $M^{**}$ holds and so for every non-empty subset $W$ of $Y$ we have
\[
\abs{N(W)} \ge \abs{L(W)} > \abs{W^{\kkk}} \ge \abs{W}
\]
Now let $J$ be an independent set of $G$ where $W = J \cap Y$.
If $\abs{W} > 0$, then $\abs{J} \le \abs{W} + \abs{D - N(W)} = \abs{W}+\abs{D} - \abs{N_D(W)} < \abs{D}$.
Therefore if $\abs{W} > 0$, it must be that $J$ is strictly smaller than $D$.
It remains to consider that $\abs{W} = 0$.
But in this case it must be that $J = D$ or $\abs{J} < \abs{D}$.
This proves that not only is the $D$-set $D$ of capacitated graph $(G, \kkk)$ a maximum independent set of $G$, it is the unique maximum independent set of $G$.

%
%
\end{proof}

Note that the condition that every vertex has strictly positive capacity is necessary for Proposition~\ref{prop:uniqueDmaxInd} to hold.
As a counter example, consider the complete bipartite graph $K_{s,t}$ with partite sets $A$ and $B$, where $\abs{A} = s < t = \abs{B}$.
Now consider the capacitated graph $(K_{s,t}, \kkk{})$ with $\kk{x} = t$ for all $x \in A$ and $\kk{y} = 0$ for all $y \in B$.
Then $A$ is the unique $D$-set of $(K_{s,t}, \kkk{})$ whereas $B$ is the unique maximum independent set of $K_{s,t}$.

\section{Complexity of Uniqueness of Nash Subgraph}\label{sec:DPs}

In this section we prove that deciding whether a capacitated graph $(G,\kkk{})$ has a unique Nash subgraph can be done in polynomial time.

\begin{theorem}\label{thm:P}
{\sc Nash Subgraph Uniqueness} is in {\sf P}.
\end{theorem}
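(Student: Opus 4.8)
The plan is to give a polynomial-time algorithm that decides {\sc $DP$-Nash Subgraph Uniqueness}. First I would dispose of the easy structural obstructions. Compute $X$, $Y$, $Z$ in polynomial time. By Lemma~\ref{lem_XZ_indep}, if $X \cup Z$ is not independent then there are already two Nash subgraphs with different $D$-sets, so in particular the Nash subgraph is not unique; the algorithm answers ``no''. So assume $X \cup Z$ is independent. Next I would use Corollary~\ref{mainCOR}: if $O^*(G,\kkk{})$ fails then there are two Nash subgraphs with different $D$-sets, and again the algorithm answers ``no''. The point of checking $O^*$ rather than $M^*$ directly is not essential here; what matters is that non-uniqueness of the $D$-set can be detected in polynomial time --- this follows because the whole characterisation theorem above is effective once we observe that the witnessing set $W$ in the proof of Claim~B can be found via a single max-flow / Hall-violator computation in $G^{\aux}$ (or, more simply, because Theorem~\ref{thm:P} only needs the \emph{decision}, and we can instead test $M^*$ by checking, for the inclusion-wise relevant sets $W$, whether a saturating matching of $L(W)$ exists). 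In the write-up I would simply cite the preceding section: deciding whether all Nash subgraphs share a $D$-set is polynomial.

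So the remaining and genuinely new case is: $X \cup Z$ is independent and $O^*(G,\kkk{})$ holds, so every Nash subgraph has the \emph{same} $D$-set, namely $D = X \cup Z$ and $P = Y$ (by Lemma~\ref{lem_XZ_indep} this is indeed a Nash subgraph). Now the question becomes purely about the edge sets: is there more than one choice of $E'$ making $(X\cup Z,\, Y;\, E')$ a Nash subgraph? A spanning bipartite subgraph on the fixed bipartition $(X\cup Z, Y)$ is a Nash subgraph iff (i) every $x \in X\cup Z$ has $d_{E'}(x) = \kk{x}$, and (ii) every $y \in Y$ has $d_{E'}(y) \geq 1$. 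Edges of $G$ with both endpoints in $X\cup Z$ do not exist (independence) and edges with both endpoints in $Y$ can never be used (both endpoints are in $P$), so $E'$ is a subset of the edges of $G$ between $X\cup Z$ and $Y$. Since $X$-vertices satisfy $d_G(x)=\kk{x}$, every edge incident to an $X$-vertex is \emph{forced} into $E'$. Hence the only freedom is in choosing, for each $z \in Z$, which $\kk{z}$ of its neighbours in $Y$ it connects to, subject to the global constraint that no vertex of $Y$ is left isolated.

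Therefore uniqueness fails iff one can ``reroute'': informally, there exist two distinct feasible selections. I would make this precise as follows. The forced edges are $E_0 = \{xy \in E(G) : x \in X,\ y \in Y\}$. Let $Y_0 \subseteq Y$ be the vertices of $Y$ already covered by $E_0$. Each $z\in Z$ must pick exactly $\kk{z}$ incident edges to $Y$; the union over $z$ of the picked edges must cover $Y \setminus Y_0$. Non-uniqueness is equivalent to the existence of two distinct such families, which in turn I would characterise by a local swap: either (1) some $z\in Z$ has strictly more than $\kk{z}$ neighbours in $Y$ that lie outside every forced-cover obligation --- more carefully, there is an alternating structure --- or (2) there is an ``augmenting''/``rotating'' configuration. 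The clean way is to phrase it as a flow/matching feasibility-plus-uniqueness problem: build a bipartite-like network with a source feeding each $z\in Z$ with capacity $\kk{z}$, arcs $z\to y$ for $zy\in E(G)$, and each $y\in Y\setminus Y_0$ requiring flow $\geq 1$ into a sink (with $y\in Y_0$ having demand $0$ but still capacity $1$). Integral feasible flows correspond exactly to valid completions $E'$. Uniqueness of the Nash subgraph then holds iff this flow problem has a unique integral solution, and uniqueness of an integral flow / $b$-matching is testable in polynomial time: compute one solution, form the residual graph, and check that it contains no directed cycle (through the relevant arcs) that would give an alternative integral solution. I expect the main obstacle to be exactly this last point --- setting up the residual-graph / alternating-cycle argument so that ``a cycle exists'' is provably equivalent to ``a second Nash subgraph exists'', being careful about the inequality constraints $d_{E'}(y)\ge 1$ (these are lower bounds, not equalities, so a naive ``symmetric difference is an even subgraph'' argument must be replaced by one tracking which $Y$-vertices are at their lower bound). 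Once that equivalence is nailed down, the algorithm is: check $X\cup Z$ independent; check $O^*$; build the network; find one integral flow; search for an alternating cycle in the residual digraph; output ``unique'' iff none is found --- all polynomial.
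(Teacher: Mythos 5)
Your overall plan---first decide in polynomial time whether all Nash subgraphs share a $D$-set, then decide whether the unique $D$-set $X\cup Z$ admits more than one edge set---founders on the first step, and this is a genuine gap rather than a presentational one. You assert that non-uniqueness of the $D$-set ``can be detected in polynomial time'' by citing the characterisation of Section~\ref{sec:char}, via ``a single max-flow / Hall-violator computation'' in $G^{\aux}$. But $M^*(G,\kkk{})$ and $O^*(G,\kkk{})$ are universally quantified over all nonempty $W\subseteq Y^{\kkk{}>0}$, and for a vertex $x\in Z$ membership in $L(W)$ is the threshold condition $|N(x)\cap W|>d(x)-\kk{x}$, not a neighbourhood condition, so Hall/flow machinery does not evaluate it with one computation. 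In fact, by Corollary~\ref{mainCOR}, deciding (given $X\cup Z$ independent) whether all Nash subgraphs have the same $D$-set is exactly {\sc $D$-set Uniqueness}, which Theorem~\ref{thm:NP} of this same paper proves {\sf co-NP}-complete; the hard instances there have $X\cup Z$ independent. So the subroutine you propose to ``simply cite'' would give {\sf P}$=${\sf co-NP}. The tractability of $O^*$ is confined to the situation where $Z$ is irrelevant, because only then does $L(W)$ reduce to $N(W)\cap X$.

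The idea you are missing is the one the paper uses to sidestep this: handle $Z$ \emph{before} ever looking at $O^*$. If some $z\in Z$ has $\kk{z}\ge 1$, then $\kk{z}<d(z)$, all neighbours of $z$ lie in $Y$ (independence of $X\cup Z$), and every $y\in Y$ is already covered by the forced edges at $X$ (for $x\in X$ all $d(x)=\kk{x}$ incident edges must be in any Nash subgraph containing $x$ in its $D$-set); hence re-choosing which $\kk{z}$ edges $z$ keeps in the canonical subgraph $(X\cup Z,Y;E')$ immediately yields two distinct Nash subgraphs, and the algorithm answers ``not unique'' without evaluating $O^*$ at all. This reduces to the case where $Z$ plays no role, where $L(W)=N(W)\cap X$ and the paper verifies $O^*$ by checking, for each $x\in X$, whether $G^{\aux}-x$ has a matching saturating $Y'$ --- polynomially many bipartite matchings (this re-expression itself needs an argument, given as Claims~A and~B in the paper's proof). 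Your step 3 is not wrong but is needlessly heavy and misplaces the difficulty: in your final case every $y\in Y$ is covered by forced $X$-edges, so the lower bounds $d_{E'}(y)\ge 1$ you flag as the main obstacle are vacuous, and edge-set uniqueness is simply ``every $z\in Z$ has $\kk{z}=0$''; no residual-cycle argument is needed. The real obstacle is the unjustified polynomial-time test of $D$-set uniqueness in the presence of $Z$, which the paper's own hardness result rules out.
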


\begin{proof}
Let $(G,\kappa)$ be a capacitated graph, and let $X = X(G,\kkk{})$, $Y=Y(G,\kkk{})$ and
$Z = Z(G,\kkk{})$ be defined as in \eqref{eq:XYZ}.
If $X \cup Z$ is not independent then there exist distinct Nash subgraphs in $(G, \kappa)$ 
by Lemma~\ref{lem_XZ_indep}. So we may assume that $X \cup Z$ is independent.
By Lemma~\ref{lem_XZ_indep} there exists a Nash subgraph $(D,P;E')$ in $G$
where $D= X \cup Z$ and $P=Y$.  

If $Z \not= \emptyset$, then let $z \in Z$
be arbitrary.  
In $E'$ we may pick any $\kk{z}$ edges out of $z$, as every vertex in $Y$ has an edge to $X$ in $E'$.
As $d(z)>\kappa(z)$ we note that 
by picking different edges incident with $z$ we get distinct 
Nash subgraphs of $G$.  We may therefore assume that $Z = \emptyset$.

Recall the definition of $G^{\aux}$, which has partite sets $R' = X$ and $Y' = Y^{\kkk{}}$ (as $Z = \emptyset$).

We will now prove the following two claims which complete the proof of the theorem since the existence of a matching in $G^{\aux} - x$ saturating its partite set
$Y'$
 can be decided in polynomial time for every $x\in X.$

\2

{\bf Claim A:} {\em If for every $x \in X$ there exists a matching in $G^{\aux} - x$ saturating $Y'$ then
there is only one Nash subgraph in $G$.}

\2

{\em Proof of Claim~A:} We will first show that if the statement of Claim~A holds then 
 $M^{**}(G,\kkk{})$ holds. 
Suppose that $M^{**}(G,\kkk{})$ does not hold. 
This implies that there is a set $\emptyset \not= W \subseteq Y^{\kkk{}>0}$ such that
$|L(W)| \leq |W^{\kkk{}}|$.
Note that, as $Z = \emptyset$, we have $L(W)=N_G(W) \cap X$. As $W \not= \emptyset$ and $W \subseteq Y$,
we have that $N_G(W) \cap X \not= \emptyset$. Let $x \in N_G(W) \cap X$ be arbitrary.
Now the following holds. 
\[
|(N_G(W) \cap X) \setminus \{x\}| = |L(W)|-1 \leq |W^{\kkk{}}| -1 < |W^{\kkk{}}|
\]

This implies that there cannot be a  matching in $G^{\aux} - x$ saturating $Y'$, a contradiction.
Thus,  $M^{**}(G,\kkk{})$ must hold. 
By Corollary~\ref{mainCOR} we have that all Nash subgraphs must therefore have the same $D$-set.
By Lemma~\ref{lem_XZ_indep} we have that all Nash subgraphs $(D,P;E')$ must therefore have
$D=X$ and $P=Y$.  By the definition of $X$ we note that $E'$ must contain exactly the edges between 
$X$ and $Y$, and therefore there is a unique Nash subgraph in $G$. $\diamond$

\2

{\bf Claim B:} {\em If for some $x \in X$ there is no matching in $G^{\aux} - x$ saturating $Y',$ then
there are at least two distinct Nash subgraphs in $G$.}

\2

{\em Proof of Claim~B:} Let $x \in X$ be defined as in the statement of Claim~B.
By Hall's Theorem there exists a set $S' \subseteq Y'$ such that $|N_{G^{\aux}}(S') \setminus \{x\}|<|S'|$.
Let $S \subseteq Y$ be the set of vertices for which there is a copy in $S'$. 
Note that $(N_G(S) \cap X) \setminus \{x\} = N_{G^{\aux}}(S') \setminus \{x\}$ and 
$|S'| \leq |S^{\kkk{}}|$, which implies the following.
\begin{eqnarray*}
|N_G(S) \cap X| & \leq & |(N_G(S) \cap X) \setminus \{x\}| + 1 =  |N_{G^{\aux}}(S')  \setminus \{x\}| + 1\\
& < & |S'|+1 \leq |S^{\kkk{}}| +1.
\end{eqnarray*}
As all terms above are integers, this implies that $|N_G(S) \cap X| \leq |S^{\kkk{}}|$.
As $L(S) = N_G(S) \cap X$ by the definition of $L(S)$, we note that $|L(S)| \leq |S^{\kkk{}}|$ 
and therefore $M^{**}(G,\kkk{})$ does not hold, which by Corollary~\ref{mainCOR} implies 
that there are distinct Nash subgraphs in $G$ (even with distinct $D$-sets).
This completes the proof of Claim~B and therefore also of the theorem. 
\end{proof}

Let us return again to consider capacitated graph $(G_1,\kkk_1)$ and its auxiliary graph $G^{\aux}(G_1,\kkk_1)$ as depicted in Figure~\ref{fig:2}.
By Claim A in Theorem~\ref{thm:P}, $(G_1,\kkk_1)$ has a unique Nash subgraph if for every $x \in X$, there is a matching in $G^{\aux}(G_1,\kkk_1) - x$ that saturates $Y'$.
Figure~\ref{fig:3} below depicts each of the four possible variants, $G^{\aux}(G_1,\kkk_1) - x_i$ for $i = 1,\dots, 4$.

\begin{figure}[hbt!]
\centering
\tikzstyle{vertexX}=[circle,draw, top color=gray!10, bottom color=gray!70, minimum size=10pt, scale=0.9, inner sep=0.5pt]
\tikzstyle{vertexY}=[circle,draw, top color=black!10, bottom color=gray!70, minimum size=15pt, scale=0.8, inner sep=0.4pt]
\hfill
\begin{tikzpicture}[scale=0.2]
\draw (5,-2.0) node {{\small $G^{\aux} - x_1$}};
\node (x2) at (3.0,1.0) [vertexY] {$x_{2}$}; 
\node (x3) at (7.0,1.0) [vertexY] {$x_{3}$}; 
\node (x4) at (11.0,1.0) [vertexY] {$x_{4}$}; 
\node (y1) at (1.0,6.0) [vertexY] {$y_{1}$}; 
\node (y2) at (5.0,6.0) [vertexY] {$y_{2}$}; 
\node (y3) at (9.0,6.0) [vertexY] {$y_{3}$}; 
\draw [thick] (x2) -- (y1); 
\draw [thick] (x2) -- (y2); 
\draw [thick] (x2) -- (y3); 
\draw [thick] (x3) -- (y1); 
\draw [thick] (x3) -- (y2); 
\draw [thick] (x3) -- (y3); 
\draw [thick] (x4) -- (y1); 
\draw [thick] (x4) -- (y2); 
\draw [thick] (x4) -- (y3); 
\end{tikzpicture} \hfill
\begin{tikzpicture}[scale=0.2]
\draw (5,-2.0) node {{\small $G^{\aux} - x_2$}};
\node (x1) at (-1.0,1.0) [vertexY] {$x_{1}$}; 
\node (x3) at (7.0,1.0) [vertexY] {$x_{3}$}; 
\node (x4) at (11.0,1.0) [vertexY] {$x_{4}$}; 
\node (y1) at (1.0,6.0) [vertexY] {$y_{1}$}; 
\node (y2) at (5.0,6.0) [vertexY] {$y_{2}$}; 
\node (y3) at (9.0,6.0) [vertexY] {$y_{3}$}; 
\draw [thick] (x1) -- (y1); 
\draw [thick] (x1) -- (y2); 
\draw [thick] (x1) -- (y3); 
\draw [thick] (x3) -- (y1); 
\draw [thick] (x3) -- (y2); 
\draw [thick] (x3) -- (y3); 
\draw [thick] (x4) -- (y1); 
\draw [thick] (x4) -- (y2); 
\draw [thick] (x4) -- (y3); 
\end{tikzpicture} \hfill 
\begin{tikzpicture}[scale=0.2]
\draw (5,-2.0) node {{\small $G^{\aux} - x_3$}};
\node (x1) at (-1.0,1.0) [vertexY] {$x_{1}$}; 
\node (x2) at (3.0,1.0) [vertexY] {$x_{2}$}; 
\node (x4) at (11.0,1.0) [vertexY] {$x_{4}$}; 
\node (y1) at (1.0,6.0) [vertexY] {$y_{1}$}; 
\node (y2) at (5.0,6.0) [vertexY] {$y_{2}$}; 
\node (y3) at (9.0,6.0) [vertexY] {$y_{3}$}; 
\draw [thick] (x1) -- (y1); 
\draw [thick] (x1) -- (y2); 
\draw [thick] (x1) -- (y3); 
\draw [thick] (x2) -- (y1); 
\draw [thick] (x2) -- (y2); 
\draw [thick] (x4) -- (y1); 
\draw [thick] (x4) -- (y2); 
\draw [thick] (x4) -- (y3); 
\end{tikzpicture}
\hfill 
\begin{tikzpicture}[scale=0.2]
\draw (5,-2.0) node {{\small $G^{\aux} - x_4$}};
\node (x1) at (-1.0,1.0) [vertexY] {$x_{1}$}; 
\node (x2) at (3.0,1.0) [vertexY] {$x_{2}$}; 
\node (x3) at (7.0,1.0) [vertexY] {$x_{3}$}; 
\node (y1) at (1.0,6.0) [vertexY] {$y_{1}$}; 
\node (y2) at (5.0,6.0) [vertexY] {$y_{2}$}; 
\node (y3) at (9.0,6.0) [vertexY] {$y_{3}$}; 
\draw [thick] (x1) -- (y1); 
\draw [thick] (x1) -- (y2); 
\draw [thick] (x1) -- (y3); 
\draw [thick] (x2) -- (y1); 
\draw [thick] (x2) -- (y2); 
\draw [thick] (x2) -- (y3); 
\draw [thick] (x3) -- (y1); 
\draw [thick] (x3) -- (y2); 
\draw [thick] (x3) -- (y3); 
\end{tikzpicture} \hfill 
\caption{The collection of graphs $G^{\aux} - x_i$ for $i = 1,\dots, 4$.}\label{fig:3}
\end{figure}

As can be seen for each $i = 1,\dots, 4$ we have that $G^{\aux} - x_i$ is a complete bipartite graph with equal sized partite sets.
Since $Y'$ is one of the partite sets in each auxiliary graph, clearly there is a matching that saturates $Y'$ in each.
Therefore, by Claim A in Theorem~\ref{thm:P}, capacitated graph $(G_1,\kkk_1)$ has a unique Nash subgraph.
This Nash subgraph is depicted below in Figure~\ref{fig:4} with vertices in the $D$-set shaded in \blue{blue} and vertices in the $P$-set shaded in \red{red}.

\begin{figure}[hbt!]
\centering
\tikzstyle{vertexX}=[circle,draw, top color=gray!10, bottom color=gray!70, minimum size=10pt, scale=0.9, inner sep=0.5pt]
\tikzstyle{vertexY}=[circle,draw, top color=black!10, bottom color=gray!70, minimum size=15pt, scale=0.8, inner sep=0.4pt]
\tikzstyle{vertexYb}=[circle,draw, top color=blue!40, bottom color=blue!40, minimum size=15pt, scale=0.8, inner sep=0.4pt]
\tikzstyle{vertexYr}=[circle,draw, top color=red!40, bottom color=red!40, minimum size=15pt, scale=0.8, inner sep=0.4pt]
\hfill
\begin{tikzpicture}[scale=0.3]
\draw (5,-1.5) node {{\small $(G_1,\kkk_1)$}};
\draw (1.0,1.0) node {{\small $1$}};
\draw (1.0,6.0) node {{\small $1$}};
\draw (9.0,6.0) node {{\small $1$}};
\draw (9.0,1.0) node {{\small $1$}};
\draw (5.0,4.9) node {{\small $3$}};
\node (x1) at (2.5,1.0) [vertexY] {$x_{4}$}; 
\node (x2) at (2.5,6.0) [vertexY] {$x_{1}$};
\node (x3) at (7.5,6.0) [vertexY] {$x_{2}$}; 
\node (x4) at (7.5,1.0) [vertexY] {$x_{3}$}; 
\node (y) at (5.0,3.5) [vertexY] {$y$};
\draw [thick] (y) -- (x1); 
\draw [thick] (y) -- (x2); 
\draw [thick] (y) -- (x3); 
\draw [thick] (y) -- (x4); 
\end{tikzpicture} \hfill 
\begin{tikzpicture}[scale=0.3]
\draw (5,-1.5) node {{\small Nash subgraph}};
\node (x1) at (2.5,1.0) [vertexYb] {$x_{4}$}; 
\node (x2) at (2.5,6.0) [vertexYb] {$x_{1}$};
\node (x3) at (7.5,6.0) [vertexYb] {$x_{2}$}; 
\node (x4) at (7.5,1.0) [vertexYb] {$x_{3}$}; 
\node (y) at (5.0,3.5) [vertexYr] {$y$};
\draw [thick] (y) -- (x1); 
\draw [thick] (y) -- (x2); 
\draw [thick] (y) -- (x3); 
\draw [thick] (y) -- (x4); 
\end{tikzpicture} \hfill \hfill 
\caption{Capacitated graph $(G_1,\kkk_1)$ and its unique Nash subgraph.}\label{fig:4}
\end{figure}

We conclude this section by considering the capacitated graph $(G_2,\kkk_2)$ from Figure~\ref{fig:1}.
The auxiliary graph $G^{\aux}(G_2,\kkk_2)$ is simply the two isolated vertices $x$ and $z$.
In the second paragraph of the proof of Theorem~\ref{thm:P} it is noted that if $Z \neq \emptyset$ then by picking different edges incident with the vertices in $Z$ we obtain different $D$-sets. Thus, $(G_2,\kkk_2)$ has at least two Nash subgraphs.
In Figure~\ref{fig:5} below we depict $(G_2,\kkk_2)$ and its three Nash subgraphs, $H_1, H_2$, and $H_3$ (the colour coding of the vertices in the Nash subgraphs is the same as that used in Figure~\ref{fig:4}).
We note that all three Nash subgraphs of $(G_2,\kkk_2)$ have the same $D$-set $\set{x, z}$.

\begin{figure}[hbt!]
\centering
\tikzstyle{vertexX}=[circle,draw, top color=gray!10, bottom color=gray!70, minimum size=10pt, scale=0.9, inner sep=0.5pt]
\tikzstyle{vertexY}=[circle,draw, top color=black!10, bottom color=gray!70, minimum size=15pt, scale=0.8, inner sep=0.4pt]
\tikzstyle{vertexYb}=[circle,draw, top color=blue!40, bottom color=blue!40, minimum size=15pt, scale=0.8, inner sep=0.4pt]
\tikzstyle{vertexYr}=[circle,draw, top color=red!40, bottom color=red!40, minimum size=15pt, scale=0.8, inner sep=0.4pt]
\begin{tikzpicture}[scale=0.3]
\draw (5,-1.5) node {{\small $(G_2,\kkk_2)$}};
\draw (1.5,1.0) node {{\small $3$}};
\draw (8.5,1.0) node {{\small $1$}};
\draw (1.0,7.4) node {{\small $0$}};
\draw (5.0,7.4) node {{\small $0$}};
\draw (9.0,7.4) node {{\small $0$}};
\node (x) at (3.0,1.0) [vertexY] {$x$}; 
\node (z) at (7.0,1.0) [vertexY] {$z$}; 
\node (y1) at (1.0,6.0) [vertexY] {$y_{1}$}; 
\node (y2) at (5.0,6.0) [vertexY] {$y_{2}$}; 
\node (y3) at (9.0,6.0) [vertexY] {$y_{3}$}; 
\draw [thick] (x) -- (y1); 
\draw [thick] (x) -- (y2); 
\draw [thick] (x) -- (y3); 
\draw [thick] (z) -- (y1); 
\draw [thick] (z) -- (y2); 
\draw [thick] (z) -- (y3); 
\end{tikzpicture} \hfill \hfill
\begin{tikzpicture}[scale=0.3]
\draw (5,-1.5) node {{\small $H_1$}};
\node (x) at (3.0,1.0) [vertexYb] {$x$};
\node (z) at (7.0,1.0) [vertexYb] {$z$};
\node (y1) at (2.0,6.0) [vertexYr] {$y_{1}$};
\node (y2) at (5.0,6.0) [vertexYr] {$y_{2}$};
\node (y3) at (8.0,6.0) [vertexYr] {$y_{3}$};
\draw  (x) -- (y1);
\draw  (x) -- (y2);
\draw  (x) -- (y3);
\draw  (z) -- (y1);
\end{tikzpicture} \hfill
\begin{tikzpicture}[scale=0.3]
\draw (5,-1.5) node {{\small $H_2$}};
\node (x) at (3.0,1.0) [vertexYb] {$x$};
\node (z) at (7.0,1.0) [vertexYb] {$z$};
\node (y1) at (2.0,6.0) [vertexYr] {$y_{1}$};
\node (y2) at (5.0,6.0) [vertexYr] {$y_{2}$};
\node (y3) at (8.0,6.0) [vertexYr] {$y_{3}$};
\draw  (x) -- (y1);
\draw  (x) -- (y2);
\draw  (x) -- (y3);
\draw  (z) -- (y2);
\end{tikzpicture} \hfill
\begin{tikzpicture}[scale=0.3]
\draw (5,-1.5) node {{\small $H_3$}};
\node (x) at (3.0,1.0) [vertexYb] {$x$};
\node (z) at (7.0,1.0) [vertexYb] {$z$};
\node (y1) at (2.0,6.0) [vertexYr] {$y_{1}$};
\node (y2) at (5.0,6.0) [vertexYr] {$y_{2}$};
\node (y3) at (8.0,6.0) [vertexYr] {$y_{3}$};
\draw  (x) -- (y1);
\draw  (x) -- (y2);
\draw  (x) -- (y3);
\draw  (z) -- (y3);
\end{tikzpicture} \hfill
\caption{Capacitated graph $(G_2,\kkk_2)$ and its three Nash subgraphs $H_1, H_2$ and $H_3$.}\label{fig:5}
\end{figure}

\section{Complexity of Uniqueness of $D$-set}\label{sec:Ds}

In this section we consider the complexity of {\sc $D$-set Uniqueness}.
We will show that it is {\sf co-NP}-complete to decide whether a capacitated graph $(G,\kappa)$ has a unique $D$-set.
However, we will also see that there are some special cases of capacitated graphs for which {\sc $D$-set Uniqueness} can be decided efficiently.

If $Z=\emptyset$ then $(G,\kkk)$ has a unique $D$-set if and only if $(G,\kkk)$ has a unique Nash subgraph (this follows from the proof of Theorem \ref{thm:P}).
Thus, if $Z=\emptyset$ then by Theorem \ref{thm:P} it is polynomial to decide whether $G$ has a unique $D$-set.

However, as we can see below, in general, it is {\sf co-NP}-complete to decide whether a capacitated graph $(G,\kappa)$ has a unique $D$-set (the {\sc $D$-set Uniqueness} problem).
To refine this result, we consider the case when $\kappa(v)=k$ for every $v\in V(G).$ 
As we observed in Section \ref{sec:prel}, if $k=0$ then $V(G)$ is the only $D$-set in $G.$
The next theorem shows that {\sc $D$-set Uniqueness} remains in {\sf P} when $k=1$.
However, Theorem \ref{thm:NP} shows that
for $k\ge 2$, {\sc $D$-set Uniqueness} is  {\sf co-NP}-complete.

\begin{theorem} 
Let $(G,\kappa)$ be a capacitated graph and let $\kk{x}=1$ for all $x \in V(G)$.
Let $X = \{x \; | \; d_G(x)=1 \}$, $Y = N(X)\setminus X$ and 
$Z = V(G) \setminus (X \cup Y)$.
Then all \PDdecomps{} have the same $D$-set if and only if
$X \cup Z$ is independent and $|N_G(y) \cap X| \geq 2$ for all $y \in Y$.
In particular, {\sc $D$-set Uniqueness} is in {\sf P}
in this case.
\end{theorem}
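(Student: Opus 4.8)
The plan is to deduce the theorem from Corollary~\ref{mainCOR} by unwinding the properties appearing there in the special case $\kk{x}=1$ for all $x\in V(G)$. First I would reconcile notation and dispose of the trivial case. With $\kk{x}=1$ everywhere, $X(G,\kkk)=\{x\mid\kk{x}=d(x)\}$ is exactly $\{x\mid d_G(x)=1\}$ and $Z(G,\kkk)=V(G)\setminus(X\cup N(X))$ is exactly the theorem's $Z$; moreover the standing Assumption~1 forces $d_G(v)\ge 1$ for every $v$, so $G$ has no isolated vertex, every $v\in Z$ has $d_G(v)\ge 2$, and every $x\in X$ has exactly one neighbour, which lies in $N(X)$. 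If $X\cup Z$ is not independent, the right-hand side of the claimed equivalence fails and, by Lemma~\ref{lem_XZ_indep}, so does the left-hand side (there exist \PDdecomps{} with different $D$-sets); so we may assume $X\cup Z$ is independent. Then $N(X)\cap X=\emptyset$, so the theorem's $Y=N(X)$ coincides with $Y(G,\kkk)=N(X)\setminus X$, and by Corollary~\ref{mainCOR} it remains to prove that $O^*(G,\kkk)$ holds if and only if $|N_G(y)\cap X|\ge 2$ for every $y\in Y$.

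Next I would rewrite $O^*$ in this setting. Since $\kk{v}=1$ for all $v$ we have $Y^{\kkk>0}=Y$ and $|W^{\kkk}|=|W|$ for every $W\subseteq Y$, and since $|N_G(v)\cap W|>d(v)-1$ is equivalent over the integers to $N_G(v)\subseteq W$,
\[
L(W)=\{\,v\in X\cup Z \mid N_G(v)\subseteq W\,\}.
\]
Hence $O^*(G,\kkk)$ just says that $|L(W)|>|W|$ for every nonempty $W\subseteq Y$. For the implication ``$|N_G(y)\cap X|\ge 2$ for all $y\in Y$'' $\Rightarrow O^*$, fix a nonempty $W\subseteq Y$: for $y\in W$ and $x\in N_G(y)\cap X$, the unique neighbour of $x$ is $y\in W$, so $x\in L(W)$; and since each vertex of $X$ has a single neighbour it belongs to $N_G(y)\cap X$ for exactly one $y$, so the sets $N_G(y)\cap X$ with $y\in W$ are pairwise disjoint. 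Thus $|L(W)|\ge\sum_{y\in W}|N_G(y)\cap X|\ge 2|W|>|W|$, giving $O^*$.

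For the converse I would argue contrapositively: if some $y_0\in Y=N(X)$ has $|N_G(y_0)\cap X|\le 1$, hence $|N_G(y_0)\cap X|=1$, then taking $W=\{y_0\}$ the set $L(W)$ contains only the one vertex of $X$ whose neighbour is $y_0$, because no $v\in Z$ can satisfy $N_G(v)\subseteq\{y_0\}$ (as $d_G(v)\ge 2$). So $|L(W)|=1=|W|=|W^{\kkk}|$ and $O^*$ fails.

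Combining these with the reduction of the first paragraph yields that all \PDdecomps{} share a $D$-set if and only if $X\cup Z$ is independent and $|N_G(y)\cap X|\ge 2$ for every $y\in Y$; since a capacitated graph has a unique $D$-set exactly when all its \PDdecomps{} share a $D$-set, and both right-hand conditions are checkable in polynomial time, {\sc $D$-set Uniqueness} lies in {\sf P} in this case. The mathematics here is light; I expect the only delicate points to be bookkeeping: matching the theorem's $Y=N(X)$ with the $Y$ of Section~\ref{sec:char} (settled by first handling the non-independent case) and invoking Assumption~1 to ensure that every vertex of $Z$ has degree at least $2$, which is precisely what stops $L(\{y_0\})$ from picking up spurious members of $Z$ in the ``only if'' direction.
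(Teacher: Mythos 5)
Your proof is correct and follows essentially the same route as the paper: dispose of the non-independent case with Lemma~\ref{lem_XZ_indep}, then, using that each $x\in X$ has a unique neighbour so the sets $N_G(y)\cap X$ ($y\in W$) are disjoint, verify $O^*(G,\kappa)$ and invoke Corollary~\ref{mainCOR}. The only harmless divergence is in the ``only if'' direction, where the paper exhibits two Nash subgraphs directly via Lemma~\ref{lem_vertex_u} and Lemma~\ref{lem_XZ_indep}, whereas you show $O^*$ fails for $W=\{y_0\}$ (correctly using $d_G(v)\ge 2$ for $v\in Z$) and again apply Corollary~\ref{mainCOR}; both arguments are valid.
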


\begin{proof}
If $X \cup Z$ is not independent then we are done by Lemma~\ref{lem_XZ_indep},
so assume that $X \cup Z$ is independent.
By Lemma~\ref{lem_XZ_indep}, there exists a \PDdecomp, $(D,P;E')$, such that $Y = P$.
If  $|N_G(y) \cap X| < 2$ for some $y \in Y$, then 
by Lemma~\ref{lem_vertex_u} there exists a \PDdecomp{}, $(D',P';E'')$, of $G,$ where
$y \in D'$. This implies that there exists \PDdecomps{} where $y$ belongs to its $D$-set 
and where $y$ belongs to its $P$-set.
Therefore we can conclude that $(G,\kappa)$ is a NO-instance for {\sc $D$-set Uniqueness}.

We now assume that $X \cup Z$ is independent and  $|N_G(y) \cap X| \geq 2$ for all $y \in Y$.
We will prove that all \PDdecomps{} have the same $D$-set in $(G,\kappa)$ and
we will do this by proving that  $M^{**}(G,\kkk{})$ holds, which by
Corollary~\ref{mainCOR} implies the desired result.

Recall that $M^{**}(G,\kkk{})$ holds if for every set $\emptyset \not= W \subseteq Y$ 
we have $|L(W)|>|W|$ (as $Y^{\kkk{}>0}=Y$ and $W^{\kkk{}}=W$). 
Let $W$ be arbitrary such that  $\emptyset \not= W \subseteq Y$.
By the definition of $L(W),$ we have that $|L(W)| \geq |N(W) \cap X|$.
As no vertex in $X$ has edges to more than one vertex in $Y$ (as $d_G(x)=1$) 
we have that $|N(W) \cap X| = \sum_{w \in W} |N(w) \cap X| \geq 2|W|$.
Therefore, we have 
\[
|L(W)| \geq |N(W) \cap X| \geq 2|W| > |W|.
\]
implying that $M^{**}(G,\kkk{})$ holds, as desired. 
\end{proof}

The following result is proved by reductions from $3$-SAT. This reduction is direct for the case of $k=2,$
where for an instance $I$ of $3$-SAT formula, we can construct a capacitated graph $(G,\kappa)$ such that 
$\kappa(x)=2$ for every vertex $x$ of $G$ and $(G,\kappa)$ contains at least two distinct $D$-sets if and only if $I$ is satisfiable.
In the case of $k\ge 3,$ we first trivially reduce from $3$-SAT to {\sc $k$-out-of-$(k+2)$-SAT}, where a CNF formula $F$ has $k+2$ 
literals in every clause and $F$ is satisfied if and only if there is a truth assignment which satisfies at least $k$ literals in every clause.
Then we reduce from {\sc $k$-out-of-$(k+2)$-SAT} to the complement of {\sc $D$-set Uniqueness}. While the main proof structure is similar in both cases,
the constructions of $(G,\kappa)$ are different. 

\begin{theorem}\label{thm:NP} Let $k\ge 2$ be an integer.
{\sc $D$-set Uniqueness} is {\sf co-NP}-complete for  capacitated graphs $(G,\kappa)$ with $\kk{x}=k$ for all $x \in V(G)$.
\end{theorem}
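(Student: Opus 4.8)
I would prove Theorem~\ref{thm:NP} by showing that the complement of {\sc $D$-set Uniqueness} (``does $(G,\kappa)$ have at least two $D$-sets?'') is {\sf NP}-complete for every fixed $k\ge 2$. Membership in {\sf NP} is immediate: a certificate is a pair of Nash subgraphs with distinct $D$-sets, and by Theorem~\ref{all_PD} (and the polynomial-time construction implicit in its proof, given a prescribed $D$-set candidate) this can be checked in polynomial time. So the work is the {\sf NP}-hardness reduction, and I would split it into the two cases flagged in the excerpt: a direct reduction from $3$-SAT when $k=2$, and a reduction from {\sc $k$-out-of-$(k+2)$-SAT} when $k\ge 3$ (the latter being reducible from $3$-SAT by a trivial padding argument: add $k-1$ fresh dummy literals to each clause, which can contribute at most $k-1$ true literals, so at least one original literal must be satisfied).

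\textbf{The construction for $k=2$.} Given a $3$-CNF formula $I$ with variables $v_1,\dots,v_n$ and clauses $C_1,\dots,C_m$, I would build a capacitated graph $(G,\kappa)$ with $\kappa\equiv 2$ whose $D$-set is forced to be essentially unique unless a satisfying assignment lets some gadget ``flip''. The natural idea is a variable gadget for each $v_i$ that has exactly two stable configurations of ``who is in $D$'' — one encoding $v_i=\texttt{true}$, one encoding $v_i=\texttt{false}$ — realised by a small cycle-like or bipartite-double-cover structure where the capacity-$2$ constraint admits two complementary ``kernels'', together with a rigid backbone (built from degree-$2$ vertices, which by Assumption~1 have $\kappa(x)=d(x)$ and hence lie in $X$, forcing their neighbours into $P$) that pins everything else in place. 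Clause gadgets would be attached so that a clause vertex can join $D$ — creating a second $D$-set distinct from the ``canonical'' one — precisely when at least one of its three literal-vertices is in the ``true'' state; using Lemma~\ref{lem_vertex_u}/Lemma~\ref{lem_XZ_indep} one then argues that such a flip is realisable as a genuine Nash subgraph. Conversely, if $I$ is unsatisfiable, I would invoke the characterisation of Section~\ref{sec:char} (Corollary~\ref{mainCOR}): show $X\cup Z$ is independent and verify $O^*(G,\kappa)$ by checking, for every nonempty $W\subseteq Y^{\kappa>0}$, the inequality $|L(W)|>|W^\kappa|$ — this reduces to a local counting argument inside the gadgets, where the crucial point is that a subset $W$ of clause/literal vertices that would violate the inequality would exactly describe a consistent truth assignment satisfying all clauses.

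\textbf{The construction for $k\ge 3$.} Here the ``two complementary kernels'' trick is harder to realise directly with a single capacity value, which is why one routes through {\sc $k$-out-of-$(k+2)$-SAT}: now each clause has $k+2$ literals and needs $k$ true, so a clause vertex of capacity $k$ sitting adjacent to its $k+2$ literal-vertices can be ``satisfied within $D$'' in the intended way exactly when $\ge k$ of them are true, matching the degree arithmetic $d(x)-\kappa(x)$ appearing in the definition of $L(W)$. The variable gadget would again provide two complementary stable states, scaled up so the capacity-$k$ condition is met, and the rigidity backbone would again be degree-$k$ vertices forced into $X$. The soundness/completeness arguments mirror the $k=2$ case, with the $O^*$ check being the part that changes in detail.

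\textbf{Main obstacle.} The hard part will be the soundness direction — proving that if $I$ is \emph{not} satisfiable then $(G,\kappa)$ has a \emph{unique} $D$-set. Exhibiting a second $D$-set from a satisfying assignment is constructive and routine; but ruling out \emph{all} alternative $D$-sets requires either a careful case analysis of which vertices of each gadget can be shifted between $D$ and $P$ in \emph{any} Nash subgraph, or (cleaner) a verification of property $O^*(G,\kappa)$ over all exponentially many candidate sets $W$, reduced via the gadget structure to a statement of the form ``every $W$ making $|L(W)|\le|W^\kappa|$ encodes a global satisfying assignment''. Getting the gadget degrees and capacities tuned so that this equivalence is exact — with no spurious small $W$ coming from partial or inconsistent assignments, and no spurious extra $D$-sets from unintended local flips in the backbone — is the delicate engineering step, and it is presumably why the authors defer the full construction to an appendix.
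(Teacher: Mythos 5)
Your high-level plan coincides with the strategy the paper itself announces just before the theorem (complement is {\sf NP}-hard; direct reduction from $3$-SAT for $k=2$; padding with $k-1$ dummy literals to get {\sc $k$-out-of-$(k+2)$-SAT} for $k\ge 3$), and the {\sf co\nobreakdash-NP} membership remark is fine. But as a proof there is a genuine gap: everything that actually carries the argument is missing. You never exhibit a concrete capacitated graph; the variable gadget is left as ``a small cycle-like or bipartite-double-cover structure'', the clause gadget as an attachment to be tuned, and you explicitly defer ``getting the gadget degrees and capacities tuned so that this equivalence is exact'' as the delicate step. That step \emph{is} the theorem. In the paper's construction for $k=2$ the work is done by a subdivided cyclic backbone $G_1^s$ (subdivision vertices have degree $2=\kappa$ and so lie in $X$), padding sets $Q=\{q_1,q_2\}$, $X^*$ of size $5$ and $Z_i$ of size $5$ (for $k\ge 3$: $y'$, $Y^*$, $X^*$ of size $k^2+1$, $Z_i$ of size $2k+1$, plus $X_i,X_i',Y_i$), whose sizes relative to the capacity $k$ force, in \emph{any} Nash subgraph other than the canonical one of Lemma~\ref{lem_XZ_indep}, that $Q\subseteq P$ and $X^*\subseteq D$, that $|D\cap W_i|=1$ for every $i$, and that every clause vertex lies in $D$ (Subclaims~C.1--C.3, via counting edges against capacities); the satisfying assignment is then read off from the requirement that $c_j\in D$ must have $\kappa(c_j)=k$ neighbours in $P$. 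None of this machinery, nor any substitute for it, appears in your proposal, so neither direction of the reduction is established.

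Two further points where your sketch diverges in ways that would need repair. First, the one mechanism you commit to --- ``a clause vertex can join $D$, creating the second $D$-set, precisely when a literal is true'' --- is not how the paper's gadget works and is the riskier design: in the paper the clause vertices lie in $Z$, belong to the canonical $D$-set, and are forced into $D$ in \emph{every} Nash subgraph; the second $D$-set arises by flipping variable/backbone vertices, and clause satisfaction is a consequence of the capacity constraint at $c_j\in D$, which neatly blocks the ``spurious flips'' you worry about. Second, your soundness route via Corollary~\ref{mainCOR} (unsatisfiable $\Rightarrow$ $O^*(G,\kappa)$ holds, by showing every violating $W$ encodes a satisfying assignment) is legitimate in principle --- it is equivalent, through the Section~\ref{sec:char} characterisation, to the paper's direct analysis of an arbitrary deviating Nash subgraph --- but it is unexecuted, and it is somewhat more awkward than you suggest because in any such construction $Z\neq\emptyset$, so the thresholds $d(x)-\kappa(x)$ in the definition of $L(W)$ must also be controlled on the padding and clause vertices, not just on literal vertices. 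As it stands the proposal is a plausible research plan that matches the paper's announced approach, but it does not contain a proof.
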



\begin{proof}

\noindent{\bf Case 1: $k=2$.}
We will reduce from $3$-SAT.
Let $I = C_1 \wedge C_2 \wedge \cdots \wedge C_m$ be an instance of $3$-SAT. 
Let $v_1,v_2,\ldots,v_n$ be the variables in $I$. Assume without loss of generality that $n$ is even (otherwise, we add to $I$ 
a new clause which contains three new variables).
We will now build a graph $G$ and let $\kappa(x)=2$ for every vertex $x$ of $G$, 
such that there exist \PDdecomps{} of $G$ with distinct $D$-sets if and only if $I$ is satisfiable.

First define $G_1$ as follows. 
Let $W_i = \{w_i,\bar{w}_i\}$ for all $i=1,2,\ldots,n$.
Let $V(G_1) = W_1 \cup W_2 \cup \cdots \cup W_n \cup \{r_1,r_2,\ldots,r_n\}$.
Let $E(G_1)$ contain all edges between $W_{2i-1}$ and $W_{2i}$ and the edge $r_{2i} r_{2i+1}$ for $i=1,2,\ldots,\frac{n}{2}$ (where $r_{n+1}=r_1$)
and edges between $r_i$ and $W_i$ for $i=1,2,\ldots,n$. The graph $G_1$ is illustrated below when $n=6$.

\2

\begin{center}
\tikzstyle{vertexX}=[circle,draw, top color=gray!10, bottom color=gray!70, minimum size=10pt, scale=0.6, inner sep=0.1pt]
\tikzstyle{vertexY}=[circle,draw, top color=black!50, bottom color=gray!70, minimum size=8pt, scale=0.6, inner sep=0.1pt]
\begin{tikzpicture}[scale=0.48]
\node (w1) at (3.0,6.0) [vertexX] {$w_{1}$}; 
\node (w1b) at (3.0,4.0) [vertexX] {$\bar{w}_{1}$}; 
\node (w2) at (5.0,6.0) [vertexX] {$w_{2}$}; 
\node (w2b) at (5.0,4.0) [vertexX] {$\bar{w}_{2}$}; 
\node (r2) at (7.0,5.0) [vertexX] {$r_{2}$}; 
\node (r3) at (9.0,5.0) [vertexX] {$r_{3}$}; 
\node (w3) at (11.0,6.0) [vertexX] {$w_{3}$}; 
\node (w3b) at (11.0,4.0) [vertexX] {$\bar{w}_{3}$}; 
\node (w4) at (13.0,6.0) [vertexX] {$w_{4}$}; 
\node (w4b) at (13.0,4.0) [vertexX] {$\bar{w}_{4}$}; 
\node (r4) at (15.0,5.0) [vertexX] {$r_{4}$}; 
\node (r5) at (17.0,5.0) [vertexX] {$r_{5}$}; 
\node (w5) at (19.0,6.0) [vertexX] {$w_{5}$}; 
\node (w5b) at (19.0,4.0) [vertexX] {$\bar{w}_{5}$}; 
\node (w6) at (21.0,6.0) [vertexX] {$w_{6}$}; 
\node (w6b) at (21.0,4.0) [vertexX] {$\bar{w}_{6}$}; 
\node (r6) at (23.0,5.0) [vertexX] {$r_{6}$}; 
\node (r7) at (1.0,5.0) [vertexX] {$r_{1}$}; 
\draw  (w1) -- (w2); 
\draw  (w1) -- (w2b); 
\draw  (w1b) -- (w2); 
\draw  (w1b) -- (w2b); 
\draw  (w2) -- (r2); 
\draw  (w2b) -- (r2); 
\draw  (r2) -- (r3); 
\draw  (r3) -- (w3); 
\draw  (r3) -- (w3b); 
\draw  (w3) -- (w4); 
\draw  (w3) -- (w4b); 
\draw  (w3b) -- (w4); 
\draw  (w3b) -- (w4b); 
\draw  (w4) -- (r4); 
\draw  (w4b) -- (r4); 
\draw  (r4) -- (r5); 
\draw  (r5) -- (w5); 
\draw  (r5) -- (w5b); 
\draw  (w5) -- (w6); 
\draw  (w5) -- (w6b); 
\draw  (w5b) -- (w6); 
\draw  (w5b) -- (w6b); 
\draw  (w6) -- (r6); 
\draw  (w6b) -- (r6); 
\draw  (r7) -- (w1); 
\draw  (r7) -- (w1b); 
\draw  (r7) to [out=270, in=180] (2,3.3); 
 \draw  (2,3.3) -- (22.0,3.3); 
 \draw  (22.0,3.3) to [out=0, in=270] (r6); 
 
\end{tikzpicture}
\end{center}

\2

Let $G_1^s$ be the graph obtained from $G_1$ after subdividing every edge once. Let $u(e)$ denote the new vertex used to subdivide the edge $e \in E(G_1)$ and let
$U=\{u(e)| e\in E(G_1)\}.$
The graph $G_1^s$ is illustrated below when $n=6$.

\2

\begin{center}
\tikzstyle{vertexX}=[circle,draw, top color=gray!10, bottom color=gray!70, minimum size=10pt, scale=0.6, inner sep=0.1pt]
\tikzstyle{vertexY}=[circle,draw, top color=black!50, bottom color=gray!70, minimum size=8pt, scale=0.6, inner sep=0.1pt]
\tikzstyle{vertexZ}=[circle,draw, top color=gray!10, bottom color=gray!20, minimum size=14pt, scale=0.7, inner sep=0.1pt]
\begin{tikzpicture}[scale=0.48]
\node (w1) at (3.0,6.0) [vertexX] {$w_{1}$}; 
\node (w1b) at (3.0,4.0) [vertexX] {$\bar{w}_{1}$}; 
\node (w2) at (5.0,6.0) [vertexX] {$w_{2}$}; 
\node (w2b) at (5.0,4.0) [vertexX] {$\bar{w}_{2}$}; 
\node (r2) at (7.0,5.0) [vertexX] {$r_{2}$}; 
\node (r3) at (9.0,5.0) [vertexX] {$r_{3}$}; 
\node (w3) at (11.0,6.0) [vertexX] {$w_{3}$}; 
\node (w3b) at (11.0,4.0) [vertexX] {$\bar{w}_{3}$}; 
\node (w4) at (13.0,6.0) [vertexX] {$w_{4}$}; 
\node (w4b) at (13.0,4.0) [vertexX] {$\bar{w}_{4}$}; 
\node (r4) at (15.0,5.0) [vertexX] {$r_{4}$}; 
\node (r5) at (17.0,5.0) [vertexX] {$r_{5}$}; 
\node (w5) at (19.0,6.0) [vertexX] {$w_{5}$}; 
\node (w5b) at (19.0,4.0) [vertexX] {$\bar{w}_{5}$}; 
\node (w6) at (21.0,6.0) [vertexX] {$w_{6}$}; 
\node (w6b) at (21.0,4.0) [vertexX] {$\bar{w}_{6}$}; 
\node (r6) at (23.0,5.0) [vertexX] {$r_{6}$}; 
\node (r7) at (1.0,5.0) [vertexX] {$r_{1}$}; 
\node (subX) at (8.0,3.3) [vertexY] {}; 
\node (sub0) at (3.8,6.0) [vertexY] {}; 
\draw  (w1) -- (sub0); 
\draw  (sub0) -- (w2); 
\node (sub1) at (4.4,4.6) [vertexY] {}; 
\draw  (w1) -- (sub1); 
\draw  (sub1) -- (w2b); 
\node (sub2) at (4.4,5.3999999999999995) [vertexY] {}; 
\draw  (w1b) -- (sub2); 
\draw  (sub2) -- (w2); 
\node (sub3) at (3.8,4.0) [vertexY] {}; 
\draw  (w1b) -- (sub3); 
\draw  (sub3) -- (w2b); 
\node (sub4) at (6.0,5.5) [vertexY] {}; 
\draw  (w2) -- (sub4); 
\draw  (sub4) -- (r2); 
\node (sub5) at (6.0,4.5) [vertexY] {}; 
\draw  (w2b) -- (sub5); 
\draw  (sub5) -- (r2); 
\node (sub6) at (8.0,5.0) [vertexY] {}; 
\draw  (r2) -- (sub6); 
\draw  (sub6) -- (r3); 
\node (sub7) at (10.0,5.5) [vertexY] {}; 
\draw  (r3) -- (sub7); 
\draw  (sub7) -- (w3); 
\node (sub8) at (10.0,4.5) [vertexY] {}; 
\draw  (r3) -- (sub8); 
\draw  (sub8) -- (w3b); 
\node (sub9) at (11.8,6.0) [vertexY] {}; 
\draw  (w3) -- (sub9); 
\draw  (sub9) -- (w4); 
\node (sub10) at (12.399999999999999,4.6) [vertexY] {}; 
\draw  (w3) -- (sub10); 
\draw  (sub10) -- (w4b); 
\node (sub11) at (12.399999999999999,5.3999999999999995) [vertexY] {}; 
\draw  (w3b) -- (sub11); 
\draw  (sub11) -- (w4); 
\node (sub12) at (11.8,4.0) [vertexY] {}; 
\draw  (w3b) -- (sub12); 
\draw  (sub12) -- (w4b); 
\node (sub13) at (14.0,5.5) [vertexY] {}; 
\draw  (w4) -- (sub13); 
\draw  (sub13) -- (r4); 
\node (sub14) at (14.0,4.5) [vertexY] {}; 
\draw  (w4b) -- (sub14); 
\draw  (sub14) -- (r4); 
\node (sub15) at (16.0,5.0) [vertexY] {}; 
\draw  (r4) -- (sub15); 
\draw  (sub15) -- (r5); 
\node (sub16) at (18.0,5.5) [vertexY] {}; 
\draw  (r5) -- (sub16); 
\draw  (sub16) -- (w5); 
\node (sub17) at (18.0,4.5) [vertexY] {}; 
\draw  (r5) -- (sub17); 
\draw  (sub17) -- (w5b); 
\node (sub18) at (19.8,6.0) [vertexY] {}; 
\draw  (w5) -- (sub18); 
\draw  (sub18) -- (w6); 
\node (sub19) at (20.4,4.6) [vertexY] {}; 
\draw  (w5) -- (sub19); 
\draw  (sub19) -- (w6b); 
\node (sub20) at (20.4,5.3999999999999995) [vertexY] {}; 
\draw  (w5b) -- (sub20); 
\draw  (sub20) -- (w6); 
\node (sub21) at (19.8,4.0) [vertexY] {}; 
\draw  (w5b) -- (sub21); 
\draw  (sub21) -- (w6b); 
\node (sub22) at (22.0,5.5) [vertexY] {}; 
\draw  (w6) -- (sub22); 
\draw  (sub22) -- (r6); 
\node (sub23) at (22.0,4.5) [vertexY] {}; 
\draw  (w6b) -- (sub23); 
\draw  (sub23) -- (r6); 
\node (sub24) at (2.0,5.5) [vertexY] {}; 
\draw  (r7) -- (sub24); 
\draw  (sub24) -- (w1); 
\node (sub25) at (2.0,4.5) [vertexY] {}; 
\draw  (r7) -- (sub25); 
\draw  (sub25) -- (w1b); 
\draw  (r7) to [out=270, in=180] (2,3.3); 
 \draw  (2,3.3) -- (subX); 
 \draw  (subX) -- (22.0,3.3); 
 \draw  (22.0,3.3) to [out=0, in=270] (r6); 
 
\end{tikzpicture}
\end{center}

\2

Let $Q=\{q_1,q_2\}$, let $X^*=\{x_1^*,x_2^*,x_3^*,x_4^*,x_5^*\}$, and for every $i=1,2,\ldots,n,$ let $Z_i = \{z_i^1,z_i^2,z_i^3,z_i^4,z_i^5\}.$  
Let $G_2$ be obtained from $G_1^s$ by adding the vertices $Q \cup X^* \cup Z_1 \cup Z_2 \cup \cdots \cup Z_n$. Furthermore, add all edges 
from $W_i$ to $Z_i$, all edges from $Z_i$ to $q_1$ for $i=1,2,\ldots,n$, and all edges from $Q$ to $X^*$. The graph $G_2$ 
is illustrated below when $n=6$.

\2

\begin{center}
\tikzstyle{vertexX}=[circle,draw, top color=gray!10, bottom color=gray!70, minimum size=10pt, scale=0.6, inner sep=0.1pt]
\tikzstyle{vertexY}=[circle,draw, top color=black!50, bottom color=gray!70, minimum size=8pt, scale=0.6, inner sep=0.1pt]
\tikzstyle{vertexZ}=[circle,draw, top color=gray!10, bottom color=gray!20, minimum size=14pt, scale=0.7, inner sep=0.1pt]
\begin{tikzpicture}[scale=0.48]
\node (w1) at (3.0,6.0) [vertexX] {$w_{1}$}; 
\node (w1b) at (3.0,4.0) [vertexX] {$\bar{w}_{1}$}; 
\node (w2) at (5.0,6.0) [vertexX] {$w_{2}$}; 
\node (w2b) at (5.0,4.0) [vertexX] {$\bar{w}_{2}$}; 
\node (r2) at (7.0,5.0) [vertexX] {$r_{2}$}; 
\node (r3) at (9.0,5.0) [vertexX] {$r_{3}$}; 
\node (w3) at (11.0,6.0) [vertexX] {$w_{3}$}; 
\node (w3b) at (11.0,4.0) [vertexX] {$\bar{w}_{3}$}; 
\node (w4) at (13.0,6.0) [vertexX] {$w_{4}$}; 
\node (w4b) at (13.0,4.0) [vertexX] {$\bar{w}_{4}$}; 
\node (r4) at (15.0,5.0) [vertexX] {$r_{4}$}; 
\node (r5) at (17.0,5.0) [vertexX] {$r_{5}$}; 
\node (w5) at (19.0,6.0) [vertexX] {$w_{5}$}; 
\node (w5b) at (19.0,4.0) [vertexX] {$\bar{w}_{5}$}; 
\node (w6) at (21.0,6.0) [vertexX] {$w_{6}$}; 
\node (w6b) at (21.0,4.0) [vertexX] {$\bar{w}_{6}$}; 
\node (r6) at (23.0,5.0) [vertexX] {$r_{6}$}; 
\node (r7) at (1.0,5.0) [vertexX] {$r_{1}$}; 
\node (z1x0) at (3.0,2.0) [vertexX] {}; 
\node (z1x1) at (3.0,1.4) [vertexX] {}; 
\node (z1x2) at (3.0,0.8) [vertexX] {}; 
\node (z1x3) at (3.0,0.20000000000000018) [vertexX] {}; 
\node (z1x4) at (3.0,-0.3999999999999999) [vertexX] {}; 
\node (z2x0) at (5.0,2.0) [vertexX] {}; 
\node (z2x1) at (5.0,1.4) [vertexX] {}; 
\node (z2x2) at (5.0,0.8) [vertexX] {}; 
\node (z2x3) at (5.0,0.20000000000000018) [vertexX] {}; 
\node (z2x4) at (5.0,-0.3999999999999999) [vertexX] {}; 
\node (z3x0) at (11.0,2.0) [vertexX] {}; 
\node (z3x1) at (11.0,1.4) [vertexX] {}; 
\node (z3x2) at (11.0,0.8) [vertexX] {}; 
\node (z3x3) at (11.0,0.20000000000000018) [vertexX] {}; 
\node (z3x4) at (11.0,-0.3999999999999999) [vertexX] {}; 
\node (z4x0) at (13.0,2.0) [vertexX] {}; 
\node (z4x1) at (13.0,1.4) [vertexX] {}; 
\node (z4x2) at (13.0,0.8) [vertexX] {}; 
\node (z4x3) at (13.0,0.20000000000000018) [vertexX] {}; 
\node (z4x4) at (13.0,-0.3999999999999999) [vertexX] {}; 
\node (z5x0) at (19.0,2.0) [vertexX] {}; 
\node (z5x1) at (19.0,1.4) [vertexX] {}; 
\node (z5x2) at (19.0,0.8) [vertexX] {}; 
\node (z5x3) at (19.0,0.20000000000000018) [vertexX] {}; 
\node (z5x4) at (19.0,-0.3999999999999999) [vertexX] {}; 
\node (z6x0) at (21.0,2.0) [vertexX] {}; 
\node (z6x1) at (21.0,1.4) [vertexX] {}; 
\node (z6x2) at (21.0,0.8) [vertexX] {}; 
\node (z6x3) at (21.0,0.20000000000000018) [vertexX] {}; 
\node (z6x4) at (21.0,-0.3999999999999999) [vertexX] {}; 
\node (q1) at (-2.0,-0.8999999999999999) [vertexX] {$q_1$}; 
\node (q2) at (-3.0,-0.8999999999999999) [vertexX] {$q_2$}; 
\node (xx0) at (-3.0,2.1) [vertexY] {}; 
\node (xx1) at (-3.0,2.7) [vertexY] {}; 
\node (xx2) at (-3.0,3.3) [vertexY] {}; 
\node (xx3) at (-3.0,3.9) [vertexY] {}; 
\node (xx4) at (-3.0,4.5) [vertexY] {}; 
\node (subX) at (8.0,3.3) [vertexY] {}; 
\node (sub0) at (3.8,6.0) [vertexY] {}; 
\draw  (w1) -- (sub0); 
\draw  (sub0) -- (w2); 
\node (sub1) at (4.4,4.6) [vertexY] {}; 
\draw  (w1) -- (sub1); 
\draw  (sub1) -- (w2b); 
\node (sub2) at (4.4,5.3999999999999995) [vertexY] {}; 
\draw  (w1b) -- (sub2); 
\draw  (sub2) -- (w2); 
\node (sub3) at (3.8,4.0) [vertexY] {}; 
\draw  (w1b) -- (sub3); 
\draw  (sub3) -- (w2b); 
\node (sub4) at (6.0,5.5) [vertexY] {}; 
\draw  (w2) -- (sub4); 
\draw  (sub4) -- (r2); 
\node (sub5) at (6.0,4.5) [vertexY] {}; 
\draw  (w2b) -- (sub5); 
\draw  (sub5) -- (r2); 
\node (sub6) at (8.0,5.0) [vertexY] {}; 
\draw  (r2) -- (sub6); 
\draw  (sub6) -- (r3); 
\node (sub7) at (10.0,5.5) [vertexY] {}; 
\draw  (r3) -- (sub7); 
\draw  (sub7) -- (w3); 
\node (sub8) at (10.0,4.5) [vertexY] {}; 
\draw  (r3) -- (sub8); 
\draw  (sub8) -- (w3b); 
\node (sub9) at (11.8,6.0) [vertexY] {}; 
\draw  (w3) -- (sub9); 
\draw  (sub9) -- (w4); 
\node (sub10) at (12.399999999999999,4.6) [vertexY] {}; 
\draw  (w3) -- (sub10); 
\draw  (sub10) -- (w4b); 
\node (sub11) at (12.399999999999999,5.3999999999999995) [vertexY] {}; 
\draw  (w3b) -- (sub11); 
\draw  (sub11) -- (w4); 
\node (sub12) at (11.8,4.0) [vertexY] {}; 
\draw  (w3b) -- (sub12); 
\draw  (sub12) -- (w4b); 
\node (sub13) at (14.0,5.5) [vertexY] {}; 
\draw  (w4) -- (sub13); 
\draw  (sub13) -- (r4); 
\node (sub14) at (14.0,4.5) [vertexY] {}; 
\draw  (w4b) -- (sub14); 
\draw  (sub14) -- (r4); 
\node (sub15) at (16.0,5.0) [vertexY] {}; 
\draw  (r4) -- (sub15); 
\draw  (sub15) -- (r5); 
\node (sub16) at (18.0,5.5) [vertexY] {}; 
\draw  (r5) -- (sub16); 
\draw  (sub16) -- (w5); 
\node (sub17) at (18.0,4.5) [vertexY] {}; 
\draw  (r5) -- (sub17); 
\draw  (sub17) -- (w5b); 
\node (sub18) at (19.8,6.0) [vertexY] {}; 
\draw  (w5) -- (sub18); 
\draw  (sub18) -- (w6); 
\node (sub19) at (20.4,4.6) [vertexY] {}; 
\draw  (w5) -- (sub19); 
\draw  (sub19) -- (w6b); 
\node (sub20) at (20.4,5.3999999999999995) [vertexY] {}; 
\draw  (w5b) -- (sub20); 
\draw  (sub20) -- (w6); 
\node (sub21) at (19.8,4.0) [vertexY] {}; 
\draw  (w5b) -- (sub21); 
\draw  (sub21) -- (w6b); 
\node (sub22) at (22.0,5.5) [vertexY] {}; 
\draw  (w6) -- (sub22); 
\draw  (sub22) -- (r6); 
\node (sub23) at (22.0,4.5) [vertexY] {}; 
\draw  (w6b) -- (sub23); 
\draw  (sub23) -- (r6); 
\node (sub24) at (2.0,5.5) [vertexY] {}; 
\draw  (r7) -- (sub24); 
\draw  (sub24) -- (w1); 
\node (sub25) at (2.0,4.5) [vertexY] {}; 
\draw  (r7) -- (sub25); 
\draw  (sub25) -- (w1b); 
\draw  (r7) to [out=270, in=180] (2,3.3); 
 \draw  (2,3.3) -- (subX); 
 \draw  (subX) -- (22.0,3.3); 
 \draw  (22.0,3.3) to [out=0, in=270] (r6); 
 \draw [rounded corners] (2.6,2.5) rectangle (3.4,-0.8999999999999999); 
\draw  (3.0,-0.8999999999999999) to [out=190, in=350] (q1); 
 \draw  (2.0,0.8) node {$Z_{1}$}; 
 \draw  (w1) to [out=245, in=115] (3.0,2.5); 
 \draw  (w1b) -- (3.0,2.5); 
 \draw [rounded corners] (4.6,2.5) rectangle (5.4,-0.8999999999999999); 
\draw  (5.0,-0.8999999999999999) to [out=190, in=350] (q1); 
 \draw  (6.0,0.8) node {$Z_{2}$}; 
 \draw  (w2) to [out=295, in=65] (5.0,2.5); 
 \draw  (w2b) -- (5.0,2.5); 
 \draw [rounded corners] (10.6,2.5) rectangle (11.4,-0.8999999999999999); 
\draw  (11.0,-0.8999999999999999) to [out=190, in=350] (q1); 
 \draw  (10.0,0.8) node {$Z_{3}$}; 
 \draw  (w3) to [out=245, in=115] (11.0,2.5); 
 \draw  (w3b) -- (11.0,2.5); 
 \draw [rounded corners] (12.6,2.5) rectangle (13.4,-0.8999999999999999); 
\draw  (13.0,-0.8999999999999999) to [out=190, in=350] (q1); 
 \draw  (14.0,0.8) node {$Z_{4}$}; 
 \draw  (w4) to [out=295, in=65] (13.0,2.5); 
 \draw  (w4b) -- (13.0,2.5); 
 \draw [rounded corners] (18.6,2.5) rectangle (19.4,-0.8999999999999999); 
\draw  (19.0,-0.8999999999999999) to [out=190, in=350] (q1); 
 \draw  (18.0,0.8) node {$Z_{5}$}; 
 \draw  (w5) to [out=245, in=115] (19.0,2.5); 
 \draw  (w5b) -- (19.0,2.5); 
 \draw [rounded corners] (20.6,2.5) rectangle (21.4,-0.8999999999999999); 
\draw  (21.0,-0.8999999999999999) to [out=190, in=350] (q1); 
 \draw  (22.0,0.8) node {$Z_{6}$}; 
 \draw  (w6) to [out=295, in=65] (21.0,2.5); 
 \draw  (w6b) -- (21.0,2.5); 
 \draw [rounded corners] (-4,-1.9) rectangle (-1,0.10000000000000009); 
\draw  (-2.5,-2.5) node {$Q$}; 
 \draw [rounded corners] (-3.5,1.6) rectangle (-2.5,5.0); 
\draw  (-3,1.6) -- (-3,0.10000000000000009); 
 \draw  (-3,5.6) node {$X^*$}; 
 
\end{tikzpicture}
\end{center}

\2

We now construct $G$ from $G_2$ by adding the vertices $\{c_1,c_2,\ldots,c_m\}$ and the following edge for each $j=1,2,\ldots,m$.
If the clause $C_j$ contains the literal $v_i$ then add an edge from $c_j$ to $w_i$ and if
 $C_j$ contains the literal $\bar{v}_i$ then add an edge from $c_j$ to $\bar{w}_i$. Finally, add an edge
from $c_j$ to $q_1$ for each $j=1,2,\ldots,m$. This completes the construction of $G$.  If $I = (\bar{v}_2 \vee v_3 \vee v_5) \wedge \cdots$ and $I$ contains six variables
then $G$ is illustrated below.

\2

\begin{center}
\tikzstyle{vertexX}=[circle,draw, top color=gray!10, bottom color=gray!70, minimum size=10pt, scale=0.6, inner sep=0.1pt]
\tikzstyle{vertexY}=[circle,draw, top color=black!50, bottom color=gray!70, minimum size=8pt, scale=0.6, inner sep=0.1pt]
\tikzstyle{vertexZ}=[circle,draw, top color=gray!10, bottom color=gray!20, minimum size=14pt, scale=0.7, inner sep=0.1pt]
\begin{tikzpicture}[scale=0.48]
\node (w1) at (3.0,6.0) [vertexX] {$w_{1}$}; 
\node (w1b) at (3.0,4.0) [vertexX] {$\bar{w}_{1}$}; 
\node (w2) at (5.0,6.0) [vertexX] {$w_{2}$}; 
\node (w2b) at (5.0,4.0) [vertexX] {$\bar{w}_{2}$}; 
\node (r2) at (7.0,5.0) [vertexX] {$r_{2}$}; 
\node (r3) at (9.0,5.0) [vertexX] {$r_{3}$}; 
\node (w3) at (11.0,6.0) [vertexX] {$w_{3}$}; 
\node (w3b) at (11.0,4.0) [vertexX] {$\bar{w}_{3}$}; 
\node (w4) at (13.0,6.0) [vertexX] {$w_{4}$}; 
\node (w4b) at (13.0,4.0) [vertexX] {$\bar{w}_{4}$}; 
\node (r4) at (15.0,5.0) [vertexX] {$r_{4}$}; 
\node (r5) at (17.0,5.0) [vertexX] {$r_{5}$}; 
\node (w5) at (19.0,6.0) [vertexX] {$w_{5}$}; 
\node (w5b) at (19.0,4.0) [vertexX] {$\bar{w}_{5}$}; 
\node (w6) at (21.0,6.0) [vertexX] {$w_{6}$}; 
\node (w6b) at (21.0,4.0) [vertexX] {$\bar{w}_{6}$}; 
\node (r6) at (23.0,5.0) [vertexX] {$r_{6}$}; 
\node (r7) at (1.0,5.0) [vertexX] {$r_{1}$}; 
\node (z1x0) at (3.0,2.0) [vertexX] {}; 
\node (z1x1) at (3.0,1.4) [vertexX] {}; 
\node (z1x2) at (3.0,0.8) [vertexX] {}; 
\node (z1x3) at (3.0,0.20000000000000018) [vertexX] {}; 
\node (z1x4) at (3.0,-0.3999999999999999) [vertexX] {}; 
\node (z2x0) at (5.0,2.0) [vertexX] {}; 
\node (z2x1) at (5.0,1.4) [vertexX] {}; 
\node (z2x2) at (5.0,0.8) [vertexX] {}; 
\node (z2x3) at (5.0,0.20000000000000018) [vertexX] {}; 
\node (z2x4) at (5.0,-0.3999999999999999) [vertexX] {}; 
\node (z3x0) at (11.0,2.0) [vertexX] {}; 
\node (z3x1) at (11.0,1.4) [vertexX] {}; 
\node (z3x2) at (11.0,0.8) [vertexX] {}; 
\node (z3x3) at (11.0,0.20000000000000018) [vertexX] {}; 
\node (z3x4) at (11.0,-0.3999999999999999) [vertexX] {}; 
\node (z4x0) at (13.0,2.0) [vertexX] {}; 
\node (z4x1) at (13.0,1.4) [vertexX] {}; 
\node (z4x2) at (13.0,0.8) [vertexX] {}; 
\node (z4x3) at (13.0,0.20000000000000018) [vertexX] {}; 
\node (z4x4) at (13.0,-0.3999999999999999) [vertexX] {}; 
\node (z5x0) at (19.0,2.0) [vertexX] {}; 
\node (z5x1) at (19.0,1.4) [vertexX] {}; 
\node (z5x2) at (19.0,0.8) [vertexX] {}; 
\node (z5x3) at (19.0,0.20000000000000018) [vertexX] {}; 
\node (z5x4) at (19.0,-0.3999999999999999) [vertexX] {}; 
\node (z6x0) at (21.0,2.0) [vertexX] {}; 
\node (z6x1) at (21.0,1.4) [vertexX] {}; 
\node (z6x2) at (21.0,0.8) [vertexX] {}; 
\node (z6x3) at (21.0,0.20000000000000018) [vertexX] {}; 
\node (z6x4) at (21.0,-0.3999999999999999) [vertexX] {}; 
\node (q1) at (-2.0,-0.8999999999999999) [vertexX] {$q_1$}; 
\node (q2) at (-3.0,-0.8999999999999999) [vertexX] {$q_2$}; 
\node (xx0) at (-3.0,2.1) [vertexY] {}; 
\node (xx1) at (-3.0,2.7) [vertexY] {}; 
\node (xx2) at (-3.0,3.3) [vertexY] {}; 
\node (xx3) at (-3.0,3.9) [vertexY] {}; 
\node (xx4) at (-3.0,4.5) [vertexY] {}; 
\node (c1) at (11.0,9.0) [vertexZ] {$c_1$}; 
\node (clm) at (15.0,9.0) [vertexZ] {$c_m$}; 
\node (subX) at (8.0,3.3) [vertexY] {}; 
\node (sub0) at (3.8,6.0) [vertexY] {}; 
\draw  (w1) -- (sub0); 
\draw  (sub0) -- (w2); 
\node (sub1) at (4.4,4.6) [vertexY] {}; 
\draw  (w1) -- (sub1); 
\draw  (sub1) -- (w2b); 
\node (sub2) at (4.4,5.3999999999999995) [vertexY] {}; 
\draw  (w1b) -- (sub2); 
\draw  (sub2) -- (w2); 
\node (sub3) at (3.8,4.0) [vertexY] {}; 
\draw  (w1b) -- (sub3); 
\draw  (sub3) -- (w2b); 
\node (sub4) at (6.0,5.5) [vertexY] {}; 
\draw  (w2) -- (sub4); 
\draw  (sub4) -- (r2); 
\node (sub5) at (6.0,4.5) [vertexY] {}; 
\draw  (w2b) -- (sub5); 
\draw  (sub5) -- (r2); 
\node (sub6) at (8.0,5.0) [vertexY] {}; 
\draw  (r2) -- (sub6); 
\draw  (sub6) -- (r3); 
\node (sub7) at (10.0,5.5) [vertexY] {}; 
\draw  (r3) -- (sub7); 
\draw  (sub7) -- (w3); 
\node (sub8) at (10.0,4.5) [vertexY] {}; 
\draw  (r3) -- (sub8); 
\draw  (sub8) -- (w3b); 
\node (sub9) at (11.8,6.0) [vertexY] {}; 
\draw  (w3) -- (sub9); 
\draw  (sub9) -- (w4); 
\node (sub10) at (12.399999999999999,4.6) [vertexY] {}; 
\draw  (w3) -- (sub10); 
\draw  (sub10) -- (w4b); 
\node (sub11) at (12.399999999999999,5.3999999999999995) [vertexY] {}; 
\draw  (w3b) -- (sub11); 
\draw  (sub11) -- (w4); 
\node (sub12) at (11.8,4.0) [vertexY] {}; 
\draw  (w3b) -- (sub12); 
\draw  (sub12) -- (w4b); 
\node (sub13) at (14.0,5.5) [vertexY] {}; 
\draw  (w4) -- (sub13); 
\draw  (sub13) -- (r4); 
\node (sub14) at (14.0,4.5) [vertexY] {}; 
\draw  (w4b) -- (sub14); 
\draw  (sub14) -- (r4); 
\node (sub15) at (16.0,5.0) [vertexY] {}; 
\draw  (r4) -- (sub15); 
\draw  (sub15) -- (r5); 
\node (sub16) at (18.0,5.5) [vertexY] {}; 
\draw  (r5) -- (sub16); 
\draw  (sub16) -- (w5); 
\node (sub17) at (18.0,4.5) [vertexY] {}; 
\draw  (r5) -- (sub17); 
\draw  (sub17) -- (w5b); 
\node (sub18) at (19.8,6.0) [vertexY] {}; 
\draw  (w5) -- (sub18); 
\draw  (sub18) -- (w6); 
\node (sub19) at (20.4,4.6) [vertexY] {}; 
\draw  (w5) -- (sub19); 
\draw  (sub19) -- (w6b); 
\node (sub20) at (20.4,5.3999999999999995) [vertexY] {}; 
\draw  (w5b) -- (sub20); 
\draw  (sub20) -- (w6); 
\node (sub21) at (19.8,4.0) [vertexY] {}; 
\draw  (w5b) -- (sub21); 
\draw  (sub21) -- (w6b); 
\node (sub22) at (22.0,5.5) [vertexY] {}; 
\draw  (w6) -- (sub22); 
\draw  (sub22) -- (r6); 
\node (sub23) at (22.0,4.5) [vertexY] {}; 
\draw  (w6b) -- (sub23); 
\draw  (sub23) -- (r6); 
\node (sub24) at (2.0,5.5) [vertexY] {}; 
\draw  (r7) -- (sub24); 
\draw  (sub24) -- (w1); 
\node (sub25) at (2.0,4.5) [vertexY] {}; 
\draw  (r7) -- (sub25); 
\draw  (sub25) -- (w1b); 
\draw  (r7) to [out=270, in=180] (2,3.3); 
 \draw  (2,3.3) -- (subX); 
 \draw  (subX) -- (22.0,3.3); 
 \draw  (22.0,3.3) to [out=0, in=270] (r6); 
 \draw [rounded corners] (2.6,2.5) rectangle (3.4,-0.8999999999999999); 
\draw  (3.0,-0.8999999999999999) to [out=190, in=350] (q1); 
 \draw  (2.0,0.8) node {$Z_{1}$}; 
 \draw  (w1) to [out=245, in=115] (3.0,2.5); 
 \draw  (w1b) -- (3.0,2.5); 
 \draw [rounded corners] (4.6,2.5) rectangle (5.4,-0.8999999999999999); 
\draw  (5.0,-0.8999999999999999) to [out=190, in=350] (q1); 
 \draw  (6.0,0.8) node {$Z_{2}$}; 
 \draw  (w2) to [out=295, in=65] (5.0,2.5); 
 \draw  (w2b) -- (5.0,2.5); 
 \draw [rounded corners] (10.6,2.5) rectangle (11.4,-0.8999999999999999); 
\draw  (11.0,-0.8999999999999999) to [out=190, in=350] (q1); 
 \draw  (10.0,0.8) node {$Z_{3}$}; 
 \draw  (w3) to [out=245, in=115] (11.0,2.5); 
 \draw  (w3b) -- (11.0,2.5); 
 \draw [rounded corners] (12.6,2.5) rectangle (13.4,-0.8999999999999999); 
\draw  (13.0,-0.8999999999999999) to [out=190, in=350] (q1); 
 \draw  (14.0,0.8) node {$Z_{4}$}; 
 \draw  (w4) to [out=295, in=65] (13.0,2.5); 
 \draw  (w4b) -- (13.0,2.5); 
 \draw [rounded corners] (18.6,2.5) rectangle (19.4,-0.8999999999999999); 
\draw  (19.0,-0.8999999999999999) to [out=190, in=350] (q1); 
 \draw  (18.0,0.8) node {$Z_{5}$}; 
 \draw  (w5) to [out=245, in=115] (19.0,2.5); 
 \draw  (w5b) -- (19.0,2.5); 
 \draw [rounded corners] (20.6,2.5) rectangle (21.4,-0.8999999999999999); 
\draw  (21.0,-0.8999999999999999) to [out=190, in=350] (q1); 
 \draw  (22.0,0.8) node {$Z_{6}$}; 
 \draw  (w6) to [out=295, in=65] (21.0,2.5); 
 \draw  (w6b) -- (21.0,2.5); 
 \draw [rounded corners] (-4,-1.9) rectangle (-1,0.10000000000000009); 
\draw  (-2.5,-2.5) node {$Q$}; 
 \draw [rounded corners] (-3.5,1.6) rectangle (-2.5,5.0); 
\draw  (-3,1.6) -- (-3,0.10000000000000009); 
 \draw  (-3,5.6) node {$X^*$}; 
 \draw  (c1) -- (w2b); 
 \draw  (c1) -- (w3); 
 \draw  (c1) -- (w5); 
 \draw  (c1) to [out=180, in=90] (q1); 
 \draw  (13,9) node {$\cdots$}; 
 \draw  (clm) -- (14.6,8.2); 
 \draw  (clm) -- (15,8.2); 
 \draw  (clm) -- (15.4,8.2); 
 \draw  (clm) -- (14.3,8.8); 
 
\end{tikzpicture}
\end{center}

\2

We will now show that there exist \PDdecomps{} of $G$ with distinct $D$-sets if and only if $I$ is satisfiable.
We prove this using the following three claims. 

{\bf Claim A:} {\em There exists a \PDdecomp{} $(D,P;E')$ of $G$ such that $P=V(G_1) \cup Q$ and $D=V(G) \setminus P$.}

\2

{\em Proof of Claim A:}  This claim follows from Lemma \ref{lem_XZ_indep}. Indeed, $X(G,\kkk{})=X^*\cup U$, $Y(G,\kkk{})=Q\cup V(G_1)$
and $Z(G,\kkk{})=V(G)\setminus (X(G,\kkk{}) \cup Y(G,\kkk{})).$ Observe that $X(G,\kkk{})\cup Z(G,\kkk{})$ is an independent set.
Thus, $Y$ is a $P$-set in $G.$~$\diamond$

\2

{\bf Claim B:} {\em If $I$ is satisfiable then there exists a \PDdecomp{}, $(D,P;E')$, such that $P \not=V(G_1) \cup Q$.}

\2

{\em Proof of Claim B:} 
Assume that $I$ is satisfiable and let $\tau$ be a truth assignment to the variables in $I$ which satisfies $I.$
Construct $P$ and $D$ as follows. We start by constructing $P\setminus U$ and $D\setminus U.$
Since $D=V(G)\setminus P$, we will describe only $P\setminus U$. For every $i=1,2,\ldots,n$, 
if $v_i$ is true in $\tau$ then add the vertex $w_i$ to $P$ otherwise $\bar{w}_i$ to $P$.
Add $Q$ to $P.$ This completes construction of $P\setminus U.$


We will now distribute the vertices $u(e)$ in $P$ and $D$ for each $e \in E(G_1)$ and construct $E'$ such that $(D,P;E')$ is a 
\PDdecomp{} of $G$ with  $P \not=V(G_1) \cup Q$.
Let $E'$ contain all edges between $X^*$ and $Q$. 
For each $z \in Z_i$ add the edge $zq_1$ to $E'$ and add the edge between $z$ and the vertex in $W_j$ that belongs to $P$ to $E'$ for
$i=1,2,\ldots,n$. For each $j=1,2,\ldots,m$ add the edge $c_j q_1$ to $E'$ and the edge from $c_j$ to the vertex $w_i$ if 
$v_i$ is a true literal in $I$ and to the vertex $\bar{w}_i$ if $\bar{v}_j$ is a true literal in $I$ (just pick one true literal in $C_j$). 

Initially add all $u(e)$ to $P$ for $e \in V(G_1)$. We will move some of these vertices to $D$ below.
For each $i=1,2,\ldots,\frac{n}{2}$ proceed as follows.

\begin{itemize}
\item If $w_{2i-1}, w_{2i} \in D$: Move the vertex $u(\bar{w}_{2i-1}\bar{w}_{2i})$ to $D$.
Now add the edges shown in the below picture to $E'$.

 

\begin{center}
\tikzstyle{vertexX}=[circle,draw, top color=gray!10, bottom color=gray!70, minimum size=22pt, scale=0.5, inner sep=0.1pt]
\tikzstyle{vertexY}=[circle,draw, top color=black!50, bottom color=gray!70, minimum size=14pt, scale=0.6, inner sep=0.1pt]
\tikzstyle{vertexZ}=[circle,draw, top color=gray!10, bottom color=gray!20, minimum size=18pt, scale=0.7, inner sep=0.1pt]
\begin{tikzpicture}[scale=0.75]
\node (r1) at (1.0,5.0) [vertexX] {$r_{2i-1}$}; 
\node (w1) at (3.0,6.0) [vertexX] {$w_{2i-1}$}; 
\node (w1b) at (3.0,4.0) [vertexX] {$\bar{w}_{2i-1}$}; 
\node (w2) at (5.0,6.0) [vertexX] {$w_{2i}$}; 
\node (w2b) at (5.0,4.0) [vertexX] {$\bar{w}_{2i}$}; 
\node (r2) at (7.0,5.0) [vertexX] {$r_{2i}$}; 
\node (r3) at (9.0,5.0) [vertexX] {$r_{2i+1}$}; 
\node (sub0) at (2.0,5.5) [vertexY] {}; 
\draw  (r1) -- (sub0); 
\node (sub1) at (2.0,4.5) [vertexY] {}; 
\draw  (r1) -- (sub1); 
\node (sub2) at (3.8,6.0) [vertexY] {}; 
\draw  (w1) -- (sub2); 
\node (sub3) at (4.4,4.6) [vertexY] {}; 
\draw  (w1) -- (sub3); 
\node (sub4) at (4.4,5.3999999999999995) [vertexY] {}; 
\draw  (sub4) -- (w2); 
\node (sub5) at (3.8,4.0) [vertexY] {}; 
\draw  (w1b) -- (sub5); 
\draw  (sub5) -- (w2b); 
\node (sub6) at (6.0,5.5) [vertexY] {}; 
\draw  (w2) -- (sub6); 
\node (sub7) at (6.0,4.5) [vertexY] {}; 
\draw  (sub7) -- (r2); 
\node (sub8) at (8.0,5.0) [vertexY] {}; 
\draw  (r2) -- (sub8); 

\end{tikzpicture}
\end{center}
 

\item If $w_{2i-1},\bar{w}_{2i} \in D$: Move the vertex $u(\bar{w}_{2i-1}w_{2i})$ to $D$. 
Now add the edges shown in the below picture to $E'$.

 

\begin{center}
\tikzstyle{vertexX}=[circle,draw, top color=gray!10, bottom color=gray!70, minimum size=22pt, scale=0.5, inner sep=0.1pt]
\tikzstyle{vertexY}=[circle,draw, top color=black!50, bottom color=gray!70, minimum size=14pt, scale=0.6, inner sep=0.1pt]
\tikzstyle{vertexZ}=[circle,draw, top color=gray!10, bottom color=gray!20, minimum size=18pt, scale=0.7, inner sep=0.1pt]
\begin{tikzpicture}[scale=0.75]
\node (r1) at (1.0,5.0) [vertexX] {$r_{2i-1}$}; 
\node (w1) at (3.0,6.0) [vertexX] {$w_{2i-1}$}; 
\node (w1b) at (3.0,4.0) [vertexX] {$\bar{w}_{2i-1}$}; 
\node (w2) at (5.0,6.0) [vertexX] {$w_{2i}$}; 
\node (w2b) at (5.0,4.0) [vertexX] {$\bar{w}_{2i}$}; 
\node (r2) at (7.0,5.0) [vertexX] {$r_{2i}$}; 
\node (r3) at (9.0,5.0) [vertexX] {$r_{2i+1}$}; 
\node (sub0) at (2.0,5.5) [vertexY] {}; 
\draw  (r1) -- (sub0); 
\node (sub1) at (2.0,4.5) [vertexY] {}; 
\draw  (r1) -- (sub1); 
\node (sub2) at (3.8,6.0) [vertexY] {}; 
\draw  (w1) -- (sub2); 
\node (sub3) at (4.4,4.6) [vertexY] {}; 
\draw  (w1) -- (sub3); 
\node (sub4) at (4.4,5.3999999999999995) [vertexY] {}; 
\draw  (w1b) -- (sub4); 
\draw  (sub4) -- (w2); 
\node (sub5) at (3.8,4.0) [vertexY] {}; 
\draw  (sub5) -- (w2b); 
\node (sub6) at (6.0,5.5) [vertexY] {}; 
\draw  (sub6) -- (r2); 
\node (sub7) at (6.0,4.5) [vertexY] {}; 
\draw  (w2b) -- (sub7); 
\node (sub8) at (8.0,5.0) [vertexY] {}; 
\draw  (r2) -- (sub8); 

\end{tikzpicture}
\end{center}
 

\item If $\bar{w}_{2i-1}, w_{2i} \in D$: Move the vertex $u(w_{2i-1}\bar{w}_{2i})$ to $D$.
Now add the edges shown in the below picture to $E'$.

 

\begin{center}
\tikzstyle{vertexX}=[circle,draw, top color=gray!10, bottom color=gray!70, minimum size=22pt, scale=0.5, inner sep=0.1pt]
\tikzstyle{vertexY}=[circle,draw, top color=black!50, bottom color=gray!70, minimum size=14pt, scale=0.6, inner sep=0.1pt]
\tikzstyle{vertexZ}=[circle,draw, top color=gray!10, bottom color=gray!20, minimum size=18pt, scale=0.7, inner sep=0.1pt]
\begin{tikzpicture}[scale=0.75]
\node (r1) at (1.0,5.0) [vertexX] {$r_{2i-1}$}; 
\node (w1) at (3.0,6.0) [vertexX] {$w_{2i-1}$}; 
\node (w1b) at (3.0,4.0) [vertexX] {$\bar{w}_{2i-1}$}; 
\node (w2) at (5.0,6.0) [vertexX] {$w_{2i}$}; 
\node (w2b) at (5.0,4.0) [vertexX] {$\bar{w}_{2i}$}; 
\node (r2) at (7.0,5.0) [vertexX] {$r_{2i}$}; 
\node (r3) at (9.0,5.0) [vertexX] {$r_{2i+1}$}; 
\node (sub0) at (2.0,5.5) [vertexY] {}; 
\draw  (r1) -- (sub0); 
\node (sub1) at (2.0,4.5) [vertexY] {}; 
\draw  (r1) -- (sub1); 
\node (sub2) at (3.8,6.0) [vertexY] {}; 
\draw  (sub2) -- (w2); 
\node (sub3) at (4.4,4.6) [vertexY] {}; 
\draw  (w1) -- (sub3); 
\draw  (sub3) -- (w2b); 
\node (sub4) at (4.4,5.3999999999999995) [vertexY] {}; 
\draw  (w1b) -- (sub4); 
\node (sub5) at (3.8,4.0) [vertexY] {}; 
\draw  (w1b) -- (sub5); 
\node (sub6) at (6.0,5.5) [vertexY] {}; 
\draw  (w2) -- (sub6); 
\node (sub7) at (6.0,4.5) [vertexY] {}; 
\draw  (sub7) -- (r2); 
\node (sub8) at (8.0,5.0) [vertexY] {}; 
\draw  (r2) -- (sub8); 

\end{tikzpicture}
\end{center}
 

\item If $\bar{w}_{2i-1},\bar{w}_{2i} \in D$: Move the vertex $u(w_{2i-1}w_{2i})$ to $D$.
Now add the edges shown in the below picture to $E'$.

\begin{center}
\tikzstyle{vertexX}=[circle,draw, top color=gray!10, bottom color=gray!70, minimum size=22pt, scale=0.5, inner sep=0.1pt]
\tikzstyle{vertexY}=[circle,draw, top color=black!50, bottom color=gray!70, minimum size=14pt, scale=0.6, inner sep=0.1pt]
\tikzstyle{vertexZ}=[circle,draw, top color=gray!10, bottom color=gray!20, minimum size=18pt, scale=0.7, inner sep=0.1pt]
\begin{tikzpicture}[scale=0.75]
\node (r1) at (1.0,5.0) [vertexX] {$r_{2i-1}$}; 
\node (w1) at (3.0,6.0) [vertexX] {$w_{2i-1}$}; 
\node (w1b) at (3.0,4.0) [vertexX] {$\bar{w}_{2i-1}$}; 
\node (w2) at (5.0,6.0) [vertexX] {$w_{2i}$}; 
\node (w2b) at (5.0,4.0) [vertexX] {$\bar{w}_{2i}$}; 
\node (r2) at (7.0,5.0) [vertexX] {$r_{2i}$}; 
\node (r3) at (9.0,5.0) [vertexX] {$r_{2i+1}$}; 
\node (sub0) at (2.0,5.5) [vertexY] {}; 
\draw  (r1) -- (sub0); 
\node (sub1) at (2.0,4.5) [vertexY] {}; 
\draw  (r1) -- (sub1); 
\node (sub2) at (3.8,6.0) [vertexY] {}; 
\draw  (w1) -- (sub2); 
\draw  (sub2) -- (w2); 
\node (sub3) at (4.4,4.6) [vertexY] {}; 
\draw  (sub3) -- (w2b); 
\node (sub4) at (4.4,5.3999999999999995) [vertexY] {}; 
\draw  (w1b) -- (sub4); 
\node (sub5) at (3.8,4.0) [vertexY] {}; 
\draw  (w1b) -- (sub5); 
\node (sub6) at (6.0,5.5) [vertexY] {}; 
\draw  (sub6) -- (r2); 
\node (sub7) at (6.0,4.5) [vertexY] {}; 
\draw  (w2b) -- (sub7); 
\node (sub8) at (8.0,5.0) [vertexY] {}; 
\draw  (r2) -- (sub8); 

\end{tikzpicture}
\end{center}

\end{itemize}

After the above process we obtain the desired \PDdecomp{} $(D,P;E')$.~$\diamond$

\2

{\bf Claim C:} {\em If there exists a \PDdecomp{} $(D,P;E')$ such that $P \not=V(G_1) \cup Q$ then $I$ is satisfiable.}

\2

{\em Proof of Claim C:} Assume that there exists a \PDdecomp{} $(D,P;E')$ of $G$ such that $P \not= V(G_1) \cup Q$.
We now show the following subclaims which complete the proof of Claim~C and therefore of Case 1.

\2

{\bf Subclaim C.1:} {\em $X^* \subseteq D$ and $Q \subseteq P$.}

{\em Proof of Subclaim C.1:} Suppose that $Q \not\subseteq P$. This implies that each vertex in $X^*$ 
has at most one neighbour in $P$ and therefore $X^* \subseteq P$. However, then each vertex in $X^*$ has an edge to $D$ in $E'$, but these
five edges are all incident with $\{q_1,q_2\}$, a contradiction to $\kk{q_1}=\kk{q_2}=2$. Therefore, $Q \subseteq P$. 
This immediately implies that $X^* \subseteq D$, as if some $x \in X^*$ belonged to $P$ then it would have no edge to $D$ in $E'$. $\diamond$

\2

{\bf Subclaim C.2:} {\em $|D \cap W_i| \leq 1$ for all $i=1,2,3,\ldots,n$.}

{\em Proof of Subclaim C.2:}  Suppose that $|D \cap W_i| > 1$ for some $i \in \{1,2,\ldots,n\}$.
This implies that $\{w_i, \bar{w}_i \} \subseteq D$. In this case we must have $Z_i \subseteq P$ as every $z \in Z_i$ 
has exactly one neighbour in $P$ (namely $q_1$). By Subclaim~C.1 we note that $q_1 \in P$, which implies that every
vertex in $Z_i$ has an edge into $D$ in $E'$ and all these five edges must be incident with $W_i$.
This is a contradiction to $\kk{w_i}=\kk{\bar{w}_i}=2$ and thus to $|D \cap W_i| > 1.$ $\diamond$

\2

{\bf Subclaim C.3:} {\em $|D \cap W_i| =1$ for all $i=1,2,3,\ldots,n$ and $c_j \in D$ for all $j=1,2,\ldots,m$}

{\em Proof of Subclaim C.3:} Let $D' = D \cap V(G_1)$. If $D'=\emptyset$, then we note that $u(e) \in D$ for every $e \in E(G_1),$ $c_j\in D$ for every $j=1,2,\ldots,m$,
and $Z_i \subseteq D$ for every $i=1,2,\ldots,n$, by Subclaim~C.1. By Subclaim~C.1 we note that $P =V(G_1) \cup Q$, a contradiction to our 
assumption in the begining of the proof of Claim~C. Therefore $D' \not=\emptyset$.

Let $G'$ be the subgraph of $G_1$ induced by the vertices in $D'$. By Subclaim~C.2 we note that the maximum degree $\Delta(G')$ of $G'$  is at most 2, as 
$G_1$ is $3$-regular and every vertex in $G_1$ is adjacent to both vertices in $W_j$ for some $j$.
This implies that $|E(G')| \leq |V(G')| = |D'|$. Let $C_P = P \cap \{c_1,c_2,\ldots,c_m\}$. 

Suppose that $|E(G')| < |D'|+|C_P|$.
Let $E''$ denote all edges in $G_1$ that are incident with at least one vertex from $D'$.
As $G_1$ is $3$-regular we note that the following holds.
\[
|E''| = 3|D'| - |E(G')| > 3|D'| - (|D'|+|C_P|) = 2|D'|-|C_P|
\]
However, we note that $u(e) \in P$ for every $e \in E''$ (as it has degree two in $G$ and a neighbour in $D$).
These $|E''|$ edges as well as the $|C_P|$ edges from $C_P$ to $D$ in $E'$ are all incident
with $D'$ in $E'$, a contradiction to $|E''|+|C_P|>2|D'|$ (as $\kappa(s) =2$ for all vertices $s\in V(G)$).
Therefore $|E(G')| \geq |D'|+|C_P|$. As $|E(G')| \leq |D'|$ this implies that $|E(G')|=|D'|$ and $|C_P|=0$.

As $\Delta(G') \leq 2$ this implies that $G'$ is a collection of cycles (that is, $2$-regular).
By Subclaim~C.2 we note that the only possible cycle in $G'$ is the cycle of length $2n$ containing 
all $r_1,r_2,\ldots,r_n$ and exactly one vertex from each set $W_i$.  Therefore $D'$ contains
exactly one vertex from each $W_i$ for $i=1,2,\ldots,n$, as desired. 
As $|C_P|=0$ we also note that $c_j \in D$ for all $j=1,2,\ldots,m.$ $\diamond$

\2

{\bf Subclaim C.4:} {\em $I$ is satisfiable if we let $v_i$ be true when $w_i \in P$ and let $v_i$ be false when $w_i \in D$ for all $i=1,2,\ldots,n$.}

{\em Proof of Subclaim C.4:} For each $j=1,2,\ldots,m$ proceed as follows. By Subclaim~C.3 we note that $c_j \in D$ and therefore has two edges to $P$
in $E'$. At least one of these edges most go to a vertex in $V(G_1)$ as $c_j$ is only incident with one edge that is not incident with $V(G_1)$ 
(namely $c_j q_1$). If $c_j w_i \in E'$ then $w_i \in P$ and $v_i$ is true and $v_i$ is a literal of $C_j$, so $C_j$ is satisfied.
Alternatively if $c_j \bar{w}_i \in E'$ then $\bar{w}_i \in P$ (as $w_i \in D$ and Subclaim~C.3) and $v_i$ is false and $\bar{v}_i$ is a literal of $C_j$, 
so again $C_j$ is satisfied. This implies that $C_j$ is satisfied for all $j=1,2,\ldots,m$, which completes the proof of Case 1.

\vspace{3mm}

\noindent{\bf Case 2: $k\ge 3.$} We will reduce from {\sc $k$-out-of-$(k+2)$-SAT}.  That is, every clause will contain $k+2$ literals, and we need to satisfy at least $k$ of the 
literals for the clause to be satisfied.  This problem is {\sf NP}-complete, as we can reduce an instance of $3$-SAT to an instance of this problem by 
adding $k-1$ dummy variables, $v_1',v_2',\ldots,v_{k-1}'$ and adding these as literals to every clause.  

\begin{center}
\tikzstyle{vertexX}=[circle,draw, top color=gray!10, bottom color=gray!70, minimum size=14pt, scale=0.6, inner sep=0.1pt]
\begin{tikzpicture}[scale=0.6]

\draw [rounded corners] (2,1) rectangle (16,3);
\node [scale=0.9] at (9,2) {$X^*$ ($k^2+1$)};

\draw [rounded corners] (4,4) rectangle (14,5);
\node [scale=0.8] at (9,4.5) {$Y^*$ ($k-1$)};

\node (yp) at (9,0) [vertexX] {$y'$};

 \draw  (9,3) -- (9,4);       
 \draw  (yp) -- (9,1);        


\draw [rounded corners] (0.5,6) rectangle (2.5,8);
\node [scale=0.8] at (1.5,7.4) {$Z_1$};
\node [scale=0.6] at (1.5,6.6) {($2k+1$)};

 \node  (v1) at (1,12.4) [vertexX] {$w_{1}$};
 \node (vb1) at (2,12.4) [vertexX] {$\bar{w}_{1}$};
 \draw [rounded corners] (0.5,11) rectangle (2.5,13);
 \node [scale=0.8] at (1.5,11.6) {$W_1$};

 \draw  (1,11) -- (0.4,10.5);    

 \draw  (1.5,8) -- (1.5,11);     
 \draw  (1.5,6) -- (7,5);        
 \draw  (2,11) -- (2.6,10.5);    
 \draw  (3.8,10.5) -- (4.2,11);  
 \draw  (4.8,11) -- (5.2,10.5);  
 \draw  (7,11) -- (6.4,10.5);    

\draw [rounded corners] (2.3,8.5) rectangle (4.2,10.5);
\node [scale=0.8] at (3.25,9.9) {$X_1$};
\node [scale=0.6] at (3.25,9.1) {$\left(\frac{k(k-1)}{2}\right)$};

\draw [rounded corners] (3.8,11) rectangle (5.2,13);
\node [scale=0.8] at (4.5,12.4) {$Y_1$};
\node [scale=0.6] at (4.5,11.6) {($k-2$)};

\draw [rounded corners] (4.8,8.5) rectangle (6.7,10.5);
\node [scale=0.8] at (5.75,9.9) {$X_1'$};
\node [scale=0.6] at (5.75,9.1) {$\left(\frac{k(k-1)}{2}\right)$};


\draw [rounded corners] (6.5,6) rectangle (8.5,8);
\node [scale=0.8] at (7.5,7.4) {$Z_2$};
\node [scale=0.6] at (7.5,6.6) {($2k+1$)};

 \node  (v2) at (7,12.4) [vertexX] {$w_{2}$};
 \node (vb2) at (8,12.4) [vertexX] {$\bar{w}_{2}$};
 \draw [rounded corners] (6.5,11) rectangle (8.5,13);
 \node [scale=0.8] at (7.5,11.6) {$W_2$};

 \draw  (7.5,8) -- (7.5,11);     
 \draw  (7.5,6) -- (9,5);        
 \draw  (8,11) -- (8.6,10.5);    
 \draw  (9.8,10.5) -- (10.2,11);  
 \draw  (10.8,11) -- (11.2,10.5);  
 \draw  (13,11) -- (12.4,10.5);    

\draw [rounded corners] (8.3,8.5) rectangle (10.2,10.5);
\node [scale=0.8] at (9.25,9.9) {$X_2$};
\node [scale=0.6] at (9.25,9.1) {$\left(\frac{k(k-1)}{2}\right)$};

\draw [rounded corners] (9.8,11) rectangle (11.2,13);
\node [scale=0.8] at (10.5,12.4) {$Y_2$};
\node [scale=0.6] at (10.5,11.6) {($k-2$)};

\draw [rounded corners] (10.8,8.5) rectangle (12.7,10.5);
\node [scale=0.8] at (11.75,9.9) {$X_2'$};
\node [scale=0.6] at (11.75,9.1) {$\left(\frac{k(k-1)}{2}\right)$};


\draw [rounded corners] (12.5,6) rectangle (14.5,8);
\node [scale=0.8] at (13.5,7.4) {$Z_3$};
\node [scale=0.6] at (13.5,6.6) {($2k+1$)};

 \node  (v3) at (13,12.4) [vertexX] {$w_{3}$};
 \node (vb3) at (14,12.4) [vertexX] {$\bar{w}_{3}$};
 \draw [rounded corners] (12.5,11) rectangle (14.5,13);
 \node [scale=0.8] at (13.5,11.6) {$W_3$};

 \draw  (13.5,8) -- (13.5,11);     
 \draw  (13.5,6) -- (11,5);        
 \draw  (14,11) -- (14.6,10.5);    
 \draw  (15.8,10.5) -- (16.2,11);  
 \draw  (16.8,11) -- (17.2,10.5);  
 \draw  (19,11) -- (18.4,10.5);    

\draw [rounded corners] (14.3,8.5) rectangle (16.2,10.5);
\node [scale=0.8] at (15.25,9.9) {$X_3$};
\node [scale=0.6] at (15.25,9.1) {$\left(\frac{k(k-1)}{2}\right)$}; 

\draw [rounded corners] (15.8,11) rectangle (17.2,13);
\node [scale=0.8] at (16.5,12.4) {$Y_3$};
\node [scale=0.6] at (16.5,11.6) {($k-2$)};

\draw [rounded corners] (16.8,8.5) rectangle (18.7,10.5);
\node [scale=0.8] at (17.75,9.9) {$X_3'$};
\node [scale=0.6] at (17.75,9.1) {$\left(\frac{k(k-1)}{2}\right)$};


\node (c1) at (6,16) [vertexX] {$c_1$};

\draw  (c1) -- (vb1);    
\draw  (c1) -- (vb2);    
\draw  (c1) -- (v3);    
\draw  (c1) -- (8.2,15.8);
\draw  (c1) -- (8,15.6);

\node (c2) at (10,16) [vertexX] {$c_2$};

\draw  (c2) -- (vb2);   
\draw  (c2) -- (vb3);
\draw  (c2) -- (12.2,15.7);
\draw  (c2) -- (12,15.5);
\draw  (c2) -- (11.8,15.2);

\end{tikzpicture} 
\end{center}

Let $I = C_1 \wedge C_2 \wedge \cdots \wedge C_m$ be an instance of {\sc $k$-out-of-$(k+2)$-SAT} where each clause $C_i$ contains $k+2$ literals.
Let $v_1,v_2,\ldots,v_n$ be the variables in $I$. We will now build a graph $G$, where $\kappa(x)=k$ for every vertex $x$ of $G$.
For each $i=1,2,3,\ldots,n,$ let  $W_i=\{w_i,\bar{w}_i\}$, let $X_i$ and $X_i'$ be vertex sets of size $\frac{k(k-1)}{2},$ 
let $Y_i$ be a vertex set of size $k-2$, and let $Z_i$ be a vertex set of size $2k+1.$
Furthermore, let $X^*$ be a vertex set of size $k^2+1$, let $Y^*$  be a vertex set of size $k-1,$ and $C=\{c_1,c_2,\ldots,c_m\}.$ 
Let $y'$ be a vertex and define a graph $G$ as follows (see the illustration above). 
$$
V(G) = \{y'\} \cup Y^* \cup X^* \cup C \cup \left( \cup_{i=1}^n ( W_i \cup X_i \cup X_i' \cup Y_i \cup Z_i ) \right)
$$
We first add the following edges to $G$ for all $i=1,2,\ldots,n$ (where subscript $n+1$ is equivalent to $1$):
All edges from $Y^*$ to $Z_i$, from $Z_i$ to $W_i$,  from $W_i$ to $X_i$, from $X_i$ to $Y_i$,  from $Y_i$ to $X_i'$,  and from $X_i'$ to $W_{i+1}$.
Furthermore, add all edges from $y'$ to $X^*$ and all edges from $X^*$ to $Y^*$.
We then for all $j=1,2,\ldots,m$ add edges from $c_j$ to the vertices corresponding to the literals in the clause.  
(In the figure above, $C_1$ contains literals $\bar{v}_1,\bar{v}_2,v_3,\ldots$ and 
$C_2$ contains literals $\bar{v}_2,\bar{v}_3,\ldots$.)

Recall that $\kk{v} = k$ for all $v \in V(G)$. Define $X$, $Y$ and $Z$ as follows.

\2

Let $X = X^* \cup \left( \cup_{i=1}^n (X_i \cup X_i') \right)$.

Let $Y = N(X) - X = \{y'\} \cup Y^*  \cup \left( \cup_{i=1}^n (W_i \cup Y_i) \right)$.

Let $Z = V(G) \setminus (X \cup Y) = C \cup \left( \cup_{i=1}^n Z_i \right)$.

\2

We will now show that there exist Nash subgraphs of $G$ with at least two distinct $D$-sets if and only if $I$ is satisfiable.
We prove this using the following claims. 

\2

{\bf Claim A:} {\em There exists a Nash subgraph $(D,P;E')$ of $G$ such that $P=Y$ and $D=X \cup Z$.}

\2

{\em Proof of Claim A:}  This claim follows from Lemma \ref{lem_XZ_indep}. Indeed, $X(G,\kkk{})=X$, $Y(G,\kkk{})=Y$
and $Z(G,\kkk{})=Z.$ Observe that $X \cup Z$ is an independent set.
Thus, $Y$ is a $P$-set in $G.$

\2

{\bf Claim B:} {\em If $I$ is satisfiable then there exists a Nash subgraph $(D,P;E')$ such that $P \not=Y$.}

\2

{\em Proof of Claim B:}
Assume that $I$ is satisfiable and let $\tau$ be a truth assignment to the variables in $I$ which satisfies $I.$
Construct $P$ and $D$ as follows. 

\begin{itemize}
\item If $\tau(v_i)=$ {\sf true} then let $w_i \in P$ and $\bar{w}_i \in D$. 
\item  If $\tau(v_i)=$ {\sf false} then let $\bar{w}_i \in P$ and $w_i \in D$.
\item Let $X_i \cup X_i'$ belong to $P$ for all $i=1,2,\ldots,n$.
\item Let $Z_i \cup Y_i$ belong to $D$ for all $i=1,2,\ldots,n$.
\end{itemize}

Let $\{y'\} \cup Y^*$ belong to $P$ and $C \cup X^*$ to $D$. This defines $P$ and $D$. We now define $E'$.
Let $E'$ contain all edges between $X^*$ and $\{y'\} \cup Y^*$. 
For every vertex in $Z_i$ ($i=1,2,\ldots,n$) add an edge to the vertex in $W_i$ that belongs 
to $P$ and add all edges to $Y^*$. For each vertex in $W_i$ ($i=1,2,\ldots,n$) that belongs to $D$ add $k$ edges to 
$X_i$.  For each $i=1,2,\ldots,n$ add $k(k-1)/2 -k$ edges from $Y_i$ to $X_i$ and $k(k-1)/2$ edges from $Y_i$ to $X_i'$ in such a way 
that each of the $k-2$ vertices in $Y_i$ is incident with $k$ edges and each vertex in $X_i$ and $X_i'$ is incident with exactly one edge (from $W_i \cup Y_i$).
Finally, for each $j =1,2,\ldots,m$ pick $k$ true literals in $C_j$ and add an edge from $c_j$ to $w_i$ if $v_i$ is one of the true literals and add
an edge from $c_j$ to $\bar{w}_i$ if $\bar{v}_i$ is a true literal.  This completes the construction of $E'$. 
We note that $(D,P;E')$ is a Nash subgraph with $P \not=Y$.

\2

{\bf Claim C:} {\em If there exists a Nash subgraph $(D,P;E')$ such that $P \not=Y$ then $I$ is satisfiable.}

\2

{\em Proof of Claim C:} Assume that there exists a Nash subgraph $(D,P;E')$ of $G$ such that $P \not= Y$.
We now show the following subclaims which complete the proof of Claim~C and therefore of the theorem.

\2

{\bf Subclaim C.1:} {\em $X^* \subseteq D$ and $\{y'\} \cup Y^* \subseteq P$.}

{\em Proof of Subclaim C.1:}
Suppose that $\{y'\} \cup Y^* \not\subseteq P$. This implies that each vertex in $X^*$
has at most $k-1$ neighbours in $P$ and therefore $X^* \subseteq P$. However, then each vertex in $X^*$ has an edge to $D$ in $E'$, but these
$k^2+1$ edges are all incident with the $k$ vertices $\{y'\} \cup Y^*$, a contradiction to $\kappa(x)=k$ for every $x\in X^*$. Therefore, $\{y'\} \cup Y^* \subseteq P$.
This immediately implies that $X^* \subseteq D$, as if some $x \in X^*$ belonged to $P$ then it would have no edge to $D$ in $E'$. $\diamond$

\2

{\bf Subclaim C.2:} {\em $|D \cap W_i| \leq 1$ for all $i=1,2,3,\ldots,n$.}

{\em Proof of Subclaim C.2:}  Suppose that $|D \cap W_i| > 1$ for some $i \in \{1,2,\ldots,n\}$.
This implies that $\{w_i, \bar{w}_i \} \subseteq D$. In this case we must have $Z_i \subseteq P$ as every $z \in Z_i$
has at most $k-1$ neighbours in $P$ (namely the vertices in $Y^*$). By Subclaim~C.1 we note that $Y^* \subseteq P$, which implies that every
vertex in $Z_i$ has an edge into $D$ in $E'$ and all these $2k+1$ edges must be incident with $W_i$.
This is a contradiction to $\kk{w_i}=\kk{\bar{w}_i}=k$ and thus to $|D \cap W_i| > 1.$ $\diamond$

\2

{\bf Subclaim C.3:} {\em $|D \cap W_i| =1$ for all $i=1,2,\ldots,n$ and $c_j \in D$ for all $j=1,2,\ldots,m$}

{\em Proof of Subclaim C.3:} By Subclaim~C.2 we note that $|D \cap W_i| \leq 1$. Suppose
that $|D \cap W_i| =0$ for some $i \in \{1,2,\ldots,n\}$. We consider the following two cases.

\2

{\bf Case C.3.1: $|D \cap W_i| =0$ for all $i=1,2,\ldots,n$.}  If $D \cap Y_i \not= \emptyset$ for some $i$, then 
$X_i \cup X_i' \subseteq P$, as every vertex, $x$, in this set has $d_G(x)=\kk{x}=k$ and at least one neighbour is in $D$.
However the $k(k-1)$ vertices in $X_i \cup X_i'$ will all have edges into $D \cap Y_i$ (as $|D \cap W_i| =0$ for all $i=1,2,\ldots,n$)
in $E'$, a contradiction to $D \cap Y_i$ being incident with at most $k|Y_i| = k(k-2)$ edges. Therefore we may assume that
 $D \cap Y_i = \emptyset$ for all $i=1,2,\ldots,n$.

By Subclaim~C.1 we note that $Y = \{y'\} \cup Y^* \cup\left(\cup_{i=1}^n (W_i \cup Y_i)\right) \subseteq P$.
In this case we must have $Y=P$, as $G-Y$ is independent. This is a contradiction to $P \not= Y$.
This completes Case C.3.1.

\2

{\bf Case C.3.2: $|D \cap W_i| \not= 0$ for some $i \in \{1,2,\ldots,n\}$.}
Without loss of generality assume that $|D \cap W_1| \not= 0$. 
This implies that $X_1 \cup X_n \subseteq P$, as every vertex, $x \in X_1 \cup X_n$, has $d_G(x)=\kk{x}=k$ and at least one neighbour is in $D$.
Without loss of generality assume that the vertex in $D \cap W_1$ has at least as many edges to $X_n$ as to $X_1$ in $E'$. 
This implies that there are at most $k/2$ edges from $D \cap W_1$ to $X_1$ in $E'$. As $k/2 < k(k-1)/2$ we note that $Y_1 \cap D \not= \emptyset$.
This implies that $X_1' \subseteq P$, as every vertex, $x \in X_1'$, has $d_G(x)=\kk{x}=k$ and at least one neighbour is in $D$.
As $X_1 \cup X_1' \subseteq P$, $X_1 \cup X_1'$ has at least $|X_1|+|X_1'| = k(k-1)$ edges into $D \cap (W_1 \cup Y_1 \cup W_2)$.
This implies that $W_2 \cap D \not= \emptyset$, as there can be at most $k/2 + k(k-2)$ edges from $D \cap (W_1 \cup Y_1)$ to 
$X_1 \cup X_1'$ in $E'$. Furthermore there are at least $k/2$ edges from $D \cap W_2$ to $X_1'$ in $E'$.
Continueeing the above process we note that $W_3 \cap D \not= \emptyset,$ $W_4 \cap D \not= \emptyset$, etc.
This contradicts the fact that  $|D \cap W_i| = 0$ for some $i$. 
This completes Case C.3.2.

\2

By the above two cases we note that $|D \cap W_i| =1$ for all $i=1,2,\ldots,n$. This implies that 
$X_i \cup X_i' \subseteq P$ for all  $i=1,2,\ldots,n$.
Therefore there are at least $nk(k-1)$ edges from $\cup_{i=1}^n (X_i \cup X_i')$ to $D \cap \left(\cup_{i=1}^n (W_i \cup Y_i) \right)$.
As $|D \cap W_i| =1$ for all $i=1,2,\ldots,n$ there can be at most $nk + nk(k-2)$ such edges. This implies that there are exactly
$nk(k-1)$ edges from $\cup_{i=1}^n (X_i \cup X_i')$ to $D \cap \left(\cup_{i=1}^n (W_i \cup Y_i) \right)$ and there are no 
edges from $D \cap \left(\cup_{i=1}^n (W_i \cup Y_i) \right)$ to any vertex in $P$ that does not lie in $\cup_{i=1}^n (X_i \cup X_i')$.
This implies that  $c_j \in D$ for all $j=1,2,\ldots,m$, as if some $c_j \in P$ then it would not have any edge to $D$.$\diamond$

\2

{\bf Subclaim C.4:} {\em $I$ is satisfiable if we let $v_i$ be true when $w_i \in P$ and let $v_i$ be false when $w_i \in D$ for all $i=1,2,\ldots,n$.}

{\em Proof of Subclaim C.4:} For each $j=1,2,\ldots,m$ proceed as follows. By Subclaim~C.3 we note that $c_j \in D$ and therefore has $k$ edges to $P$
in $E'$. 
If $c_j w_i \in E'$ then $w_i \in P$ and $v_i$ is true and $v_i$ is a literal of $C_j$ which is satisfied.
Alternatively if $c_j \bar{w}_i \in E'$ then $\bar{w}_i \in P$ (as $w_i \in D$ and Subclaim~C.3) and $v_i$ is false and $\bar{v}_i$ is a literal of $C_j$
which again is satisfied.
As there are $k$ edges from $c_j$ to $P$ in $E'$ we obtain $k$ true literals in $C_j$, 
which implies that $C_j$ is satisfied for all $j=1,2,\ldots,m$, which completes the proof of the theorem.
\end{proof}

While we have proved that {\sc $D$-set Uniqueness} is {\sf co-NP}-complete, it is not hard to solve this problem in time ${\cal O}^*(2^n)$, where ${\cal O}^*$ hides not only coefficients, but also polynomials in $n.$
Indeed, we can consider every non-empty subset $S$ of $V(G)$ in turn and check whether $S$ is the $D$-set of a Nash subgraph of $(G,\kappa)$ using network flows. 
Conditional on the well-known Exponential Time Hypothesis (ETH)\footnote{ETH introduced by \cite{ImpagliazzoP01} conjectures that there is a constant $\delta>0$ such that 3-SAT cannot be solved in time ${\cal O}^*(2^{n \delta})$.} holding, one can show the following:

\begin{corollary}
Unless ETH fails, there is constant $\delta'>0$ such that  {\sc $D$-set Uniqueness } cannot be solved in time ${\cal O}^*(2^{n \delta'})$.
\end{corollary}
\begin{proof}
Let us consider the reduction from 3-SAT to {\sc  $D$-set Uniqueness}  in the proof of Theorem \ref{thm:NP} (Case 1, i.e. $k=2$). Let $I$ be an instance of 3-SAT with $n$ variables and $m$ clauses and let $p$ be the number of vertices in the graph $G$ we construct in the reduction. 
By the sparsification lemma of \cite{ImpagliazzoP01}, there is a constant $\delta'' > 0$ such that $I$ cannot be solved in time ${\cal O}^*(2^{n \delta''})$. It is not hard to see that $p=\Theta(n)+m.$ This implies our result.
\end{proof}


\section{Conclusion}\label{sec:conc}

In this paper, we have considered a variant of the dominating set problem that arises naturally in the study of public goods on networks when there are capacity constraints on sharing \citep{GerkeGHN}.
This problem, the exact capacitated dominating set problem, is captured by Nash subgraphs and their associated partite sets the $D$-set and the $P$-set.
Interestingly, and somewhat unusually for variants of the dominating set problem, the exact capacitated domination set problem admits unique solutions, and this has been our focus in this paper.

To be more specific, we have focused on two questions, {\sc Nash Subgraph Uniqueness} - {decide whether a capacitated graph has a unique Nash subgraph} - and {\sc $D$-set Uniqueness} - {decide whether a capacitated graph has a unique $D$-set}, and we considered their time complexities.
Despite the obvious similarities between the two problems, we show that their time complexities are very different (unless {\sf P}={\sf co-NP}).
That is, we show that {\sc Nash Subgraph Uniqueness} is polynomial-time solvable whereas {\sc $D$-set Uniqueness} is {\sf co-NP}-complete.



Consider a capacitated graph $(K_{3n},\kappa),$ where $n\ge 1$ and $\kappa(v)=2$ for every $v\in V(K_{3n}).$
(The capacitated graph $(G_3,\kkk_3)$ in Figure~\ref{fig:1} is one such an example where $n = 2$.)
Observe that every $p$-size subset of $V(K_{3n})$ for $n\le p\le 3n-2$ is a $D$-set.
Thus, a capacitated graph can have an exponential number of $D$-sets and hence of Nash subgraphs. This leads to the following open questions: (a) What is the complexity of counting all Nash subgraphs of a capacitated graph? (b) Is there an ${\cal O}^*({\rm dp}(G,\kappa))$-time algorithm to generate all Nash subgraphs of $(G,\kappa)$, where ${\rm dp}(G,\kappa)$ is the number of Nash subgraphs in $(G,\kappa)$?

\paragraph{Acknowledgement} Anders Yeo's research was partially supported by grant DFF-7014-00037B of Independent Research Fund Denmark.


\bibliographystyle{plainnat}
\bibliography{DsetRefs.bib}

\newpage

\appendix

\section*{APPENDIX}\label{APP}

\section{Proof of Theorem \ref{all_PD}}\label{sec:AppExistence}

The  proof  is a slight modification of the proof in \cite{GerkeGHN}.

Recall that by Assumption 1, for all $v \in V$ we have $\kk{v} \leq d(v)$.
The proof proceeds by induction on the number $n$ of vertices of $G.$ If $n=1$, then $G$ is a Nash subgraph with  $D=V(G)$ and $P=\emptyset$.
Now assume the claim is true for all graphs with fewer than $n\geq 2$ vertices, and let $G$ be a graph on $n$ vertices.

\2

\noindent{\bf Case 1: There is a vertex $u$ of degree equal $\kk{u}.$} Let $B$ be the star with center $\{u\}$ and
leaves $N_G(u)$ and let $G'=G-B$. 
If $G'$ has no vertices then $B$ is clearly a Nash subgraph of $G$ with $D$-set $\{u\}$ and $P$-set $N_G(u).$ 
Otherwise, by induction hypothesis, $G'$ has a Nash subgraph $H'=(D',P';E)$. 
Construct a subgraph $H$ of $G$ from the disjoint union of $H'$ and $B$ as follows. 
For every $v\in D'$ with $d_{G'}(v)<\kk{v},$ add exactly $\kappa(v)-d_{G'}(v)$ edges of $G$ between $v$ and $N_G(u).$ Set $D=D'\cup \{u\} $ and $P=P'\cup N(u).$
To see that $H$ is a Nash subgraph of $G,$ observe that (a) $H$ is a spanning bipartite subgraph of $G$ as $H'$ and $B$ are bipartite and the added edges are between $D$ and $P$ only, (b) every vertex $x\in D$ has degree in $H$ equal to $\kappa(x),$ and (c) every vertex $y \in P$ is of positive degree (since it is so in both $B$ and $H'$). 

\2

\noindent{\bf Case 2: For every vertex $z\in V(G)$, $d_{G}(z)>\kappa(z)$.}  Let $u$ be an arbitrary vertex. Delete any $d_{G}(u)-\kk{u}$ edges incident to $u$ and denote the resulting graph by $L$. Observe that every Nash subgraph of $L$ is a Nash subgraph of $G$ since no vertex in $L$ has degree less than $\kk{u}.$ This reduces Case 2 to Case 1.\qed

\end{document}